\newtheorem{theorem}{Theorem}
\newtheorem{proposition}{Proposition}
\newtheorem{lemma}{Lemma}
\theoremstyle{definition}
\newtheorem{definition}{Definition}%
\newtheorem{remark}{Remark}
\newtheorem{example}{Example}
\newcommand*{\norm}[1]{\left\Vert{#1}\right\Vert}
\newcommand*{\abs}[1]{\left\vert{#1}\right\vert}
\newcommand*{\Real}{\mathbb{R}}
\newcommand*{\Natural}{\mathbb{N}}
\newcommand*{\Fs}{\mathcal{F}}
\newcommand*{\Hs}{\mathcal{H}}
\newcommand*{\dist}{\mathrm{dist}}
\let\save@mathaccent\mathaccent
\newcommand*\if@single[3]{%
  \setbox0\hbox{${\mathaccent"0362{#1}}^H$}%
  \setbox2\hbox{${\mathaccent"0362{\kern0pt#1}}^H$}%
  \ifdim\ht0=\ht2 #3\else #2\fi
  }
\newcommand*\rel@kern[1]{\kern#1\dimexpr\macc@kerna}
\newcommand*\widebar[1]{\@ifnextchar^{{\wide@bar{#1}{0}}}{\wide@bar{#1}{1}}}
\newcommand*\wide@bar[2]{\if@single{#1}{\wide@bar@{#1}{#2}{1}}{\wide@bar@{#1}{#2}{2}}}
\newcommand*\wide@bar@[3]{%
  \begingroup
  \def\mathaccent##1##2{%
    \let\mathaccent\save@mathaccent
    \if#32 \let\macc@nucleus\first@char \fi
    \setbox\z@\hbox{$\macc@style{\macc@nucleus}_{}$}%
    \setbox\tw@\hbox{$\macc@style{\macc@nucleus}{}_{}$}%
    \dimen@\wd\tw@
    \advance\dimen@-\wd\z@
    \divide\dimen@ 3
    \@tempdima\wd\tw@
    \advance\@tempdima-\scriptspace
    \divide\@tempdima 10
    \advance\dimen@-\@tempdima
    \ifdim\dimen@>\z@ \dimen@0pt\fi
    \rel@kern{0.6}\kern-\dimen@
    \if#31
      \overline{\rel@kern{-0.6}\kern\dimen@\macc@nucleus\rel@kern{0.4}\kern\dimen@}%
      \advance\dimen@0.4\dimexpr\macc@kerna
      \let\final@kern#2%
      \ifdim\dimen@<\z@ \let\final@kern1\fi
      \if\final@kern1 \kern-\dimen@\fi
    \else
      \overline{\rel@kern{-0.6}\kern\dimen@#1}%
    \fi
  }%
  \macc@depth\@ne
  \let\math@bgroup\@empty \let\math@egroup\macc@set@skewchar
  \mathsurround\z@ \frozen@everymath{\mathgroup\macc@group\relax}%
  \macc@set@skewchar\relax
  \let\mathaccentV\macc@nested@a
  \if#31
    \macc@nested@a\relax111{#1}%
  \else
    \def\gobble@till@marker##1\endmarker{}%
    \futurelet\first@char\gobble@till@marker#1\endmarker
    \ifcat\noexpand\first@char A\else
      \def\first@char{}%
    \fi
    \macc@nested@a\relax111{\first@char}%
  \fi
  \endgroup
}
\newcommand*{\mint}[1]{%
  \mint@l{#1}{}%
}
\newcommand*{\mint@l}[2]{%
  \@ifnextchar\limits{%
    \mint@l{#1}%
  }{%
    \@ifnextchar\nolimits{%
      \mint@l{#1}%
    }{%
      \@ifnextchar\displaylimits{%
        \mint@l{#1}%
      }{%
        \mint@s{#2}{#1}%
      }%
    }%
  }%
}
\newcommand*{\mint@s}[2]{%
  \@ifnextchar_{%
    \mint@sub{#1}{#2}%
  }{%
    \@ifnextchar^{%
      \mint@sup{#1}{#2}%
    }{%
      \mint@{#1}{#2}{}{}%
    }%
  }%
}
\def\mint@sub#1#2_#3{%
  \@ifnextchar^{%
    \mint@sub@sup{#1}{#2}{#3}%
  }{%
    \mint@{#1}{#2}{#3}{}%
  }%
}
\def\mint@sup#1#2^#3{%
  \@ifnextchar_{%
    \mint@sub@sup{#1}{#2}{#3}%
  }{%
    \mint@{#1}{#2}{}{#3}%
  }%
}
\def\mint@sub@sup#1#2#3^#4{%
  \mint@{#1}{#2}{#3}{#4}%
}
\def\mint@sup@sub#1#2#3_#4{%
  \mint@{#1}{#2}{#4}{#3}%
}
\newcommand*{\mint@}[4]{%
  \mathop{}%
  \mkern-\thinmuskip
  \mathchoice{%
    \mint@@{#1}{#2}{#3}{#4}%
        \displaystyle\textstyle\scriptstyle
  }{%
    \mint@@{#1}{#2}{#3}{#4}%
        \textstyle\scriptstyle\scriptstyle
  }{%
    \mint@@{#1}{#2}{#3}{#4}%
        \scriptstyle\scriptscriptstyle\scriptscriptstyle
  }{%
    \mint@@{#1}{#2}{#3}{#4}%
        \scriptscriptstyle\scriptscriptstyle\scriptscriptstyle
  }%
  \mkern-\thinmuskip
  \int#1%
  \ifx\\#3\\\else_{#3}\fi
  \ifx\\#4\\\else^{#4}\fi  
}
\newcommand*{\mint@@}[7]{%
  \begingroup
    \sbox0{$#5\int\m@th$}%
    \sbox2{$#5\int_{}\m@th$}%
    \dimen2=\wd0 %
    \let\mint@limits=#1\relax
    \ifx\mint@limits\relax
      \sbox4{$#5\int_{\kern1sp}^{\kern1sp}\m@th$}%
      \ifdim\wd4>\wd2 %
        \let\mint@limits=\nolimits
      \else
        \let\mint@limits=\limits
      \fi
    \fi
    \ifx\mint@limits\displaylimits
      \ifx#5\displaystyle
        \let\mint@limits=\limits
      \fi
    \fi
    \ifx\mint@limits\limits
      \sbox0{$#7#3\m@th$}%
      \sbox2{$#7#4\m@th$}%
      \ifdim\wd0>\dimen2 %
        \dimen2=\wd0 %
      \fi
      \ifdim\wd2>\dimen2 %
        \dimen2=\wd2 %
      \fi
    \fi
    \rlap{%
      $#5
        \vcenter{%
          \hbox to\dimen2{%
            \hss
            $#6{#2}\m@th$%
            \hss
          }%
        }%
      $
    }%
  \endgroup
}
\newcommand*{\vint}[1]{\mint{-}\limits_{#1}}
\title{Characterization of mean value harmonic functions on~norm~induced~metric~measure spaces with~weighted~Lebesgue~measure}
\author{Antoni Kijowski\textsuperscript{\dag}}
\address{\textsuperscript{\dag}Institute of Mathematics, Polish Academy of Sciences, ul. Śniadeckich 8, 00-656 Warsaw, Poland}
\email{akijowski@impan.pl}
\date{}
\begin{document}

\begin{abstract}
We study the mean-value harmonic functions on open subsets of $\mathbb{R}^n$ equipped with weighted Lebesgue measures and norm induced metrics. Our main result is a necessary condition saying that all such functions solve a certain homogeneous system of elliptic PDEs. Moreover, a converse result is established in case of analytic weights. Assuming Sobolev regularity of weight $w \in W^{l,\infty}$ we show that strongly harmonic functions are as well in $W^{l,\infty}$ and that they are analytic, whenever the weight is analytic.

The analysis is illustrated by finding all mean-value harmonic functions in $\Real^2$ for the $l^p$-distance \mbox{$1 \leq p \leq \infty$}. The essential outcome is a certain discontinuity with respect to $p$, i.e. that for all $p \ne 2$ there are only finitely many linearly independent mean-value harmonic functions, while for $p=2$ there are infinitely many of them. We conclude with a remarkable observation that strongly harmonic functions in~$\mathbb{R}^n$ possess the mean value property with respect to infinitely many weight functions obtained from a given weight.

\mbox{}\\
\noindent
\textit{Keywords:} harmonic function, mean value property, metric measure space, Minkowski functional, norm induced metric, Pizzetti formula, system of elliptic PDE's, weighted Lebesgue measure.

\mbox{}\\
\noindent
\textit{Mathematical Subject Classification (2010): Primary: 31C05; Secondary: 35J99, 30L99.}
\end{abstract}

\maketitle
\section{Introduction}
\pagestyle{plain}
Analysis on metric spaces has been intensively developed through the last two decades. Studies of such researchers as Cheeger, Hajłasz, Heinonen, Koskela and Shanmugalingam brought new light to a notion of the gradient in metric measure spaces. One of many important notions of this area is a counterpart of a harmonic function on metric measure spaces being a minimizer of the Dirichlet energy. Recently, there has been a new approach to this topic by using the mean value property. Such an approach is much easier to formulate, than the variational one, because it does not require the notion of the Sobolev spaces on metric measure spaces. Strongly and weakly harmonic were introduced in \cite{J} and~\cite{H} by Adamowicz, Gaczkowski and Górka. Authors developed the theory of such functions providing e.g. the Harnack inequality, the H\"older and Lipschitz regularity results and studying the Perron method. Nevertheless, many questions remain unanswered, including the one on the relation between minimizers of the Dirichlet energy and mean value harmonic functions. In order to understand this class of functions in the abstract metric setting one needs to investigate their properties in the classical setting of Euclidean domains, or in the wider class of Riemannian manifolds.

Recall, that by a \emph{metric measure space} we denote metric space $(X,d)$ equipped with Borel regular measure $\mu$, which assigns to every ball a positive and finite value. In this setting we introduce the following class of functions.
\begin{definition}[Definition 3.1 in \cite{H}]\label{sh}
Let $\Omega \subset X$ be an open set. We call any locally integrable function $u: \Omega \rightarrow \mathbb{R}$ a \emph{strongly harmonic} in $\Omega$ if for all balls $B(x,r) \Subset \Omega$ there holds
\[ u(x) = \vint{B(x,r)} u(y) d\mu(y).\]
We call a radius $r>0$ \emph{admissible} at some $x \in \Omega$ whenever $B(x,r) \Subset \Omega$.  The space of all strongly harmonic functions on $\Omega$ is denoted by $\Hs(\Omega,d,\mu)$.
\end{definition}

The main subject of this work is a characterization of strongly harmonic functions on a certain class of metric measure spaces. Namely, we consider an open subset $\Omega \subset \Real^n$ equipped with a norm induced metric $d$ and a weighted Lebesgue measure $d\mu = w(x) dx $, $w \in L^1_{loc}(\Omega)$, $w>0$ a.e.

Bose, Flatto, Friedman, Littman, Zalcman studied the mean value property in the Euclidean setting, see \cite{B2,B3,B4,F2,F3,F1,F5,F}. We extended their results with our main result, see Theorem \ref{twsys} below. It generalizes Theorem 1 in \cite{F5} (see Theorem \ref{F-L} below) and Theorem 1 in \cite{B4} (see Theorem \ref{twbo} below) in the following ways:
\begin{enumerate}[{(1)}]
\item we consider general metric functions induced by a norm, not necessarily the Euclidean one,
\item we allow more general measures, i.e. the weighted Lebesgue measures $d\mu = w dx$,
\item the Bose approach uses derivatives of strongly differentiable weights and techniques suitable only for that case, whereas we only need to assume Sobolev regularity of $w$.
\end{enumerate}

Throughout the paper we use the multi-index notation proposed e.g. in the Evans' book \cite{E2}.
\begin{theorem}\label{twsys}
Let $\Omega \subset \Real^n$ be an open set. Let further $(\Omega,d,\mu)$ be a metric measure space equipped with a norm induced metric $d$ and a weighted Lebesgue measure $d\mu=wdx$, $w \in L^1_{loc}(\Omega)$, $w>0$ a.e. Suppose that the weight $w \in W^{2l,\infty}_{loc}(\Omega)$ for some given $l \in \Natural_+$. Then every function $u \in \Hs(\Omega,d,w dx)$ is a weak solution to the following system of partial differential equations
\begin{equation}\label{system3}
 \sum_{|\alpha|=j} A_\alpha \left( D^\alpha (uw) - uD^\alpha w \right) =0, \qquad \text{for }j=2,4,\ldots,2l.
\end{equation}
Coefficients $A_\alpha$ are defined as follows:
\[ A_\alpha:= \binom{\abs{\alpha}}{\alpha} \int_{B(0,1)} x^\alpha dx = \frac{\abs{\alpha}!}{\alpha_1! \cdot \ldots \cdot \alpha_n!} \int_{B(0,1)} x_1^{\alpha_1}\cdot \ldots \cdot x_n^{\alpha_n} dx. \]
\end{theorem}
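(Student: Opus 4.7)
My plan is to test the mean-value identity against a smooth, compactly supported function and to transfer all derivatives onto $w$ and the test function, thereby circumventing the fact that $u$ is only assumed to be locally integrable. Fix $\phi\in C_c^\infty(\Omega)$ and take $r<\dist(\mathrm{supp}\,\phi,\partial\Omega)$, so that $B(x,r)\Subset\Omega$ for every $x$ in the support of $\phi$. Starting from $u(x)\int_{B(x,r)} w(y)\,dy=\int_{B(x,r)} u(y)w(y)\,dy$, I multiply by $\phi(x)$, integrate, and apply Fubini. The crucial observation is that norm balls are origin-symmetric, so $\chi_{B(x,r)}(y)=\chi_{B(y,r)}(x)$; after the substitution $x=y+rz$ and one more use of the symmetry $z\mapsto-z$ of $B(0,1)$, the identity rearranges to
\[
\int_\Omega u(y)\Bigl[\phi(y)\int_{B(0,1)} w(y+rz)\,dz-w(y)\int_{B(0,1)}\phi(y+rz)\,dz\Bigr]\,dy=0.
\]

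Next, I Taylor expand $w(y+rz)$ to order $2l$ using $w\in W^{2l,\infty}_{\mathrm{loc}}(\Omega)$, and $\phi(y+rz)$ to the same order. Integrating in $z\in B(0,1)$, all terms with $|\alpha|$ odd drop out because $\int_{B(0,1)} z^\alpha\,dz=0$ by origin-symmetry, and the $|\alpha|=0$ contribution cancels between the two pieces. Using $\frac{1}{\alpha!}\int_{B(0,1)} z^\alpha\,dz=\frac{A_\alpha}{|\alpha|!}$, the displayed identity reduces to
\[
\sum_{m\in\{2,4,\ldots,2l\}}\frac{r^{m}}{m!}\sum_{|\alpha|=m} A_\alpha\int_\Omega u(y)\bigl[\phi(y)D^\alpha w(y)-w(y)D^\alpha\phi(y)\bigr]\,dy=o(r^{2l}).
\]
The remainder bound $o(r^{2l})$ is obtained via dominated convergence: the pointwise Taylor remainder of $w$ is $O(r^{2l})$ by $D^{2l}w\in L^\infty_{\mathrm{loc}}$ and is genuinely $o(r^{2l})$ at Lebesgue points of $D^{2l}w$, and after division by $r^{2l}$ it is dominated by an integrable function supported in $\mathrm{supp}\,\phi$.

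Finally, since the polynomial-plus-$o(r^{2l})$ expression vanishes for every small $r>0$, I extract coefficients iteratively: dividing by $r^2$ and letting $r\to 0$ isolates the $m=2$ identity, then subtracting and dividing by $r^4$ isolates $m=4$, and so on up to $m=2l$. For every even $m\in\{2,\ldots,2l\}$ and every $\phi\in C_c^\infty(\Omega)$ one therefore obtains
\[
\sum_{|\alpha|=m} A_\alpha\int_\Omega u(y)\bigl[\phi(y)D^\alpha w(y)-w(y)D^\alpha\phi(y)\bigr]\,dy=0.
\]
Because $m$ is even, distributional integration by parts gives $\int uw\,D^\alpha\phi=\int\phi\,D^\alpha(uw)$, rewriting the last identity as $\int\phi\sum_{|\alpha|=m} A_\alpha[D^\alpha(uw)-uD^\alpha w]\,dy=0$ for all $\phi$, which is precisely the weak formulation of \eqref{system3}. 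The main obstacle is the regularity mismatch: $u$ is only locally integrable, so a direct pointwise Pizzetti-type expansion is unavailable; the test-function duality above bypasses this, but then the Sobolev remainder of $w$ must be controlled sharply, and the iterative extraction must proceed carefully all the way to the top level $m=2l$, which is the reason the hypothesis $w\in W^{2l,\infty}_{\mathrm{loc}}$ is used sharply.
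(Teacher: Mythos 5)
Your proposal is correct, and it takes a genuinely different route from the paper's. The paper first bootstraps the regularity of $u$ itself (Proposition \ref{1p} and Theorem \ref{reg}, proved via difference quotients and a discrete Fa\'a di Bruno formula) to get $u\in W^{2l,\infty}_{loc}$, then localizes, passes to the Fourier side via Parseval, encodes the ball moments through $\int_{B(0,1)}e^{iry\xi}\,dy$, and isolates $\sum_{|\alpha|=j}A_\alpha\xi^\alpha$ by differentiating $j$ times with respect to $r$ at $r=0$ before transforming back. You instead stay entirely on the physical side: you dualize against $\phi$ using the symmetry $\chi_{B(x,r)}(y)=\chi_{B(y,r)}(x)$ (valid precisely because $d$ comes from a norm), Taylor-expand $w$ and $\phi$ in $r$, and extract the coefficients of the resulting polynomial in $r$ iteratively --- this extraction is the real-variable counterpart of the paper's $j$-fold $r$-differentiation at $0$. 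What your route buys: no Fourier analysis, and, notably, no regularity of $u$ beyond $L^1_{loc}$, so the regularity machinery that the paper explicitly lists as a prerequisite for Theorem \ref{twsys} is not needed for this implication; the price is that you obtain \eqref{system3} only in the distributional sense, with $D^\alpha(uw)$ read as the pairing $\int uw\,D^\alpha\phi$ for even $|\alpha|$, whereas the paper's bootstrap makes $D^\alpha(uw)$ a genuine $L^\infty_{loc}$ function --- though the theorem claims only a weak solution, so your conclusion matches the statement. Two details deserve explicit justification in a full write-up: the order-$2l$ Taylor expansion of $w\in W^{2l,\infty}_{loc}=C^{2l-1,1}_{loc}$ with pointwise remainder $O(r^{2l})$ uniformly and $o(r^{2l})$ for a.e.\ $y$ follows from Rademacher's theorem applied to the Lipschitz field $D^{2l-1}w$ (your appeal to ``Lebesgue points of $D^{2l}w$'' is the right idea, and in fact only the $L^1$-averaged form of the remainder over $z\in B(0,1)$ is needed, which the Calder\'on--Zygmund a.e.\ differentiability theorem also gives); and the uniform bound $|R_w(y,r)|\le Cr^{2l}$ on a neighborhood of $\mathrm{supp}\,\phi$, which is exactly what legitimizes your dominated-convergence step against the merely locally integrable $u$.
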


In order to prove Theorem \ref{twsys} we need to establish regularity results which are stated as Proposition \ref{1p} and Theorem \ref{reg}. Namely, we show that if weight $w$ is locally bounded and in the $W^{1,p}_{loc}$, then all strongly harmonic functions are in $W^{1,p}_{loc}$ and that if $w$ is in $W^{l,\infty}$, then all strongly harmonic functions are in $W^{l,\infty}_{loc}$.

Our second main result is the following converse to Theorem \ref{twsys}.
\begin{theorem}\label{twod}
Let $\Omega \subset \Real^n$ be an open set and $(\Omega,d,\mu)$ be a metric measure space equipped with a norm induced metric $d$ and a weighted measure $d\mu = w dx$. Suppose that weight $w$ is analytic and positive in $\Omega$. Then, any solution $u$ to system of equations \eqref{system3} is strongly harmonic in $\Omega$.
\end{theorem}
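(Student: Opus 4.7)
The plan is to reverse the computation that produces Theorem~\ref{twsys}: I would Taylor-expand $u(y)w(y)$ and $w(y)$ about the center $x$, integrate term by term over the norm-ball, and observe that each coefficient of the resulting power series in $r$ vanishes --- the odd-order coefficients by symmetry of $B(0,1)$, the even-order coefficients by \eqref{system3}. Since $w$ is analytic, \eqref{system3} can be invoked for every even $j \geq 2$.

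The first step is to establish that $u$ itself is real-analytic. The $j = 2$ case of \eqref{system3}, after applying Leibniz to $D^\alpha(uw)$, reads
\[
w(x)\sum_{|\alpha|=2} A_\alpha D^\alpha u(x) + (\text{terms of order } \leq 1 \text{ in } u) = 0,
\]
whose principal symbol at $\xi \in \Real^n \setminus \{0\}$ equals $w(x)\int_{B(0,1)} (\xi \cdot z)^2\, dz$ by the binomial identity. Since $(\xi \cdot z)^2$ is a nonnegative polynomial vanishing only on a hyperplane, this integral is strictly positive, and combined with $w > 0$ the operator is uniformly elliptic with real-analytic coefficients. The Morrey--Nirenberg analytic hypoellipticity theorem then yields that $u$ is real-analytic on $\Omega$.

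For the core expansion, fix $x \in \Omega$ and choose $\rho_x > 0$ so that the Taylor series of $uw$ and of $w$ at $x$ converge uniformly on $B(x, \rho_x)$. For $r \in (0, \rho_x)$, the substitution $y = x + rz$ and term-by-term integration give
\begin{align*}
\int_{B(x,r)} (u(y) - u(x)) w(y)\, dy = r^n \sum_{j=0}^\infty \frac{r^j}{j!} \sum_{|\alpha|=j} A_\alpha \bigl(D^\alpha(uw)(x) - u(x) D^\alpha w(x)\bigr),
\end{align*}
after rewriting $\int_{B(0,1)} z^\alpha\, dz = \frac{\alpha!}{|\alpha|!} A_\alpha$. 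The $j=0$ term vanishes trivially; for odd $j$ the measure-preserving symmetry $z \mapsto -z$ of the norm-ball $B(0,1)$ (which holds for any norm since $\|z\| = \|-z\|$) forces $\int_{B(0,1)} z^\alpha\, dz = 0$, hence $A_\alpha = 0$; and for even $j \geq 2$ the hypothesis \eqref{system3} makes the inner sum vanish. Hence $u$ satisfies the mean-value identity for every $r \in (0, \rho_x)$.

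The main obstacle is to upgrade this ``small-$r$'' mean-value property to every admissible $r$, since $\rho_x$ controls Taylor convergence and may be smaller than the admissible radius. I plan to fix such an $r$ and introduce $\phi(t) := \int_{B(0,1)} (u(x + trz) - u(x)) w(x + trz)\, dz$ for $t \in [0, 1]$, so that by the previous step $\phi \equiv 0$ on $(0, \rho_x/r)$. Real-analyticity of $u$ and $w$ on an open neighborhood of the compact set $\overline{B(x, r)}$ yields a $z$-uniform complex-analytic extension of the integrand in the variable $t$ across any $t_0 \in [0, 1]$; integrating in $z$ via Fubini and Morera then shows $\phi$ is real-analytic on $[0, 1]$, and vanishing on an interval forces $\phi(1) = 0$, which is the required mean-value identity at radius $r$. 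The delicate technical point is this $z$-uniform complex extension, which substitutes for the classical Green's-identity / differentiation-in-$r$ argument that is unavailable here because the norm-sphere need not be smooth (e.g.\ for the $l^\infty$ norm in $\Real^n$).
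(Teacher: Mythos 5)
Your proposal is correct, and its engine is the same as the paper's: your ellipticity computation for the $j=2$ equation is precisely the paper's Lemma \ref{lem4} (the paper invokes the analytic-regularity theorem \eqref{daur} from Evans where you invoke Morrey--Nirenberg, with the same symbol estimate $w\int_{B(0,1)}(\xi\cdot z)^2\,dz>0$), and your term-by-term Taylor expansion using $\int_{B(0,1)} z^\alpha\,dz = \frac{\alpha!}{|\alpha|!}A_\alpha$ reproduces by hand exactly the two series \eqref{e43}--\eqref{e44} that the paper obtains by citing Zalcman's generalized Pizzetti formula (Theorem \ref{twza}) for $w$ and $uw$; in both arguments the odd-order coefficients vanish by the central symmetry of the norm ball and the even-order ones by \eqref{system3}. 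Where you genuinely go beyond the paper is the final continuation step. Formula \eqref{gpiz} is asserted only for those $r$ for which the right-hand side converges, so the paper's proof, as written, yields the mean value identity only for $r$ below the local radius of convergence of the Taylor expansions at $x$, and says nothing about how to reach an arbitrary admissible radius with $B(x,r)\Subset\Omega$, which Definition \ref{sh} demands. Your function $\phi(t)=\int_{B(0,1)}\bigl(u(x+trz)-u(x)\bigr)w(x+trz)\,dz$ --- extended holomorphically in $t$ uniformly in $z$ via compactness of $\overline{B(x,r)}$, shown analytic by Fubini and Morera, and killed by the identity theorem since it vanishes for small $t$ --- closes this large-radius point cleanly, and you are right that it sidesteps any differentiation-in-$r$ or Green's-identity argument, which would be problematic since the norm sphere need not be smooth (e.g.\ $l^\infty$). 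In short, your route buys a complete treatment of all admissible radii at the modest cost of re-deriving the Pizzetti expansion that the paper simply quotes.
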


Another, perhaps most surprising results are presented in Section 4 where we illustrate Theorem \ref{twsys} with the following observations:

\medskip
\noindent
\textit{If $p\ne 2$ and $n=2$, then the space $\Hs(\Omega,l^p,dx)$ is spanned by 8 linearly independent harmonic polynomials.}
\mbox{}

We already know that $\Hs(\Omega,l^2,dx)$ consists of all harmonic functions in $\Omega$, which differs significantly from the previous case. It is worthy mentioning here, that this dimension coincides with the number of linear isometries of $(\Real^n, l^p)$, which is $2^n n!$ and is computed in \cite{LI}. For more information see Section 4.2, 4.3 and 5.1.

\newpage
\noindent
\textbf{The organization of the paper.}

In Preliminaries we introduce basic notions and definitions, which will be essential in further parts of the paper. Among them we give the definitions of strongly and weakly harmonic functions on metric measure spaces and motivations for their formulations.

In Section 3 we present a historical sketch of the studies of the mean value property ending with the proof of Theorem \ref{twsys}. Moreover, by assuming the Sobolev regularity of weights, we prove in Theorem \ref{reg} that strongly harmonic functions are in the Sobolev space of the same order as the weight, see also Proposition~\ref{1p}. Further on we recall results of Flatto and Friedman--Littman concerning functions with the mean value property in the sense of Flatto (see \eqref{fmean} below) and compare them to strongly harmonic functions. Then, we recall a result of Friedman--Littman \cite{F5} which characterizes functions with the mean value property in the sense of Flato for the Lebesgue measure, but a metric not necessarily the Euclidean one. In fact we extend their proof to describe such functions via a system of PDEs. On the other hand, we present another approach studied by Bose \cite{B2,B3,B4}. He considered a mean value property on Euclidean balls for a weighted Lebesgue measure. We generalize both approaches in Theorem \ref{twsys} to the case of a weighted Lebesgue measure and a norm induced metric. We show that this case is the only one in which strongly harmonic functions coincide with those having mean value property in the sense of Flatto.

In Section 4 we focus on $l^p$ metrics for $1\leq p \leq \infty$. Equations of system~\eqref{system3} are calculated explicitly with their coefficients $A_\alpha$. We show that there appear only two distinct cases: either $p=2$ and $\Hs(\Omega,l^2,dx)$ consists of all functions which Laplacian vanishes in $\Omega$, or $p\ne 2$ and there are only finitely many linearly independent strongly harmonic functions in the space $\Hs(\Omega,l^p,dx)$. Similar observation can be obtained in higher dimensions using our techniques.

The last Section is devoted to proving Theorem \ref{twod}, a converse to Theorem \ref{twsys}. In order to complete that goal we recall the notion of generalized Pizzetti formula following Zalcman \cite{F}. Moreover, in Lemma \ref{lem4} we prove that equation for $j=2$ of \eqref{system3} is of the elliptic type. We use this fact to prove a regularity of strongly harmonic functions, i.e. that all strongly harmonic functions are analytic whenever weight is analytic.

We conclude the Section 5 with applying Theorem \ref{twod} to obtain the following peculiar observation. Suppose that $u$ is strongly harmonic, weight $w$ is smooth and metric is Euclidean. Then, $u$ is strongly harmonic with respect to infinitely many weights obtained as compositions of the Laplacian on $w$, \mbox{i.e. $\Delta^l w$ for $l \in \Natural$.}

\medskip
\noindent
\textbf{Acknowledgements:} A.K. is particularly grateful to his academic advisor Tomasz Adamowicz for important comments, fruitful discussions and valuable lessons in many cases extending results of the paper.
\section{Preliminaries}

In this section we introduce basic notions used in further parts of the work.


Let $A \subset \mathbb{R}^n$ be any set of positive Lebesgue measure, $|A|\, >0$, and a measurable function $f: \mathbb{R}^n\rightarrow \mathbb{R}$, then the \emph{mean value of $f$} over set $A$ will be denoted by
\begin{equation*}
\vint{A} f(x) dx := \frac{1}{|A|}\int\limits_{A} f(x) dx.
\end{equation*}

We say that a function $u \in C^2(\Omega)$ is \emph{harmonic} in an open set $\Omega \subset \mathbb{R}^n$, if  $ \Delta u =0$ in $\Omega$. One of several properties of harmonic functions is the Gauss theorem stating that if $u$ is harmonic, then it has mean value property with respect to balls and spheres. There is an elegant converse relation between the mean value property and harmonicity brought by Hansen-Nadirashvili in \cite{A}: \textit{Let $\Omega$ be an open bounded subset of $\mathbb{R}^n$, $u \in C( \Omega) \cap L^\infty(\Omega)$ be such that for every $x \in \Omega$ there exists $0< r^x \leq \dist(x,\partial \Omega)$ with the property $ u(x) = \vint{B(x,r^x)} u(y) dy$. Then $u$ is harmonic in $\Omega$.}

The aforementioned relation between harmonicity and the mean value property leads to formulating a relaxed version of the strong harmonicity (cf. Definition \ref{sh}): \textit{Let $\Omega \subset X$ be an open set. We call any locally integrable function $u: \Omega \rightarrow \mathbb{R}$ \emph{weakly harmonic} in $\Omega$ if for all points $x \in \Omega$ there exists at least one radius $0 < r^x < \dist(x, \partial \Omega)$ with the following property $u(x) = \vint{B(x,r^x)} u(y) d\mu(y)$.} For further information about properties of weakly and strongly harmonic functions we refer to \cite{H,J}.

Let us consider a function $f: \Real^n \rightarrow \Real$. For $x,h \in \Real^n$ we define the \emph{difference} of $f$ at $x$ as follows
\[ \Delta_h f(x) := f(x+h) -f(x).\] 
Observe, that for any $h,h' \in \Real^n$ difference operators $\Delta_h$ and $\Delta_{h'}$ commute. In what follows we use the multi-1index notation proposed e.g. in the Evans' book \cite{E2}. For $h=(h_1,\ldots,h_n )$, $h_i \in \Real^n \setminus \{0\}$ and $\alpha=(\alpha_1,\ldots,\alpha_n) \in \Natural^n$ the \emph{$\alpha$-th difference} is defined as follows
\begin{equation}\label{dif}
 \Delta_h^\alpha f(x) := (\Delta_{h_1})^{\alpha_1} \circ (\Delta_{h_2})^{\alpha_2} \circ \ldots \circ (\Delta_{h_n})^{\alpha_n} f(x) .
\end{equation}
The \emph{$\alpha$-th difference quotient} of $f$ is the following expression $\frac{\Delta_h^\alpha f(x)}{|h|^\alpha} : =\frac{\Delta_h^\alpha f(x)}{|h_1|^{\alpha_1} \ldots |h_n|^{\alpha_n}}$. Formulas describing the difference of a multiple and a quotient of two functions are similar to formulas describing their derivatives. Let us consider two functions $f,g: \Real^n \rightarrow \Real$ with $g>0$. In what follows we will need a representation of the $\alpha$-th difference quotient of $f/g$ in terms of difference operators applied to the nominator $f$ and the denominator $g$. Observe, that for $\alpha \in \Natural^n$ there holds a difference quotients analogue of the Leibniz formula
\begin{equation}\label{ee0}
\Delta^\alpha_h \left( \frac{f}{g} \right) (x)  = \sum_{\beta \leq \alpha} \binom{\alpha}{\beta} \Delta_h^{\alpha - \beta}  f(x + \beta h) \Delta_h^\beta \left( \frac{1}{g} \right)(x),
\end{equation} 
where by $\beta \leq \alpha$ we understand that $\beta_i \leq \alpha_i$ for all $i=1,\ldots,n$. Using \eqref{ee0} we only need to express the $\beta$-th difference quotient of $1/g$ in terms of difference quotients of $g$. To reach that goal we use a discrete variant of the Fa\'a di Bruno formula developed in \cite{FAA}, from which one can derive the following result.
\begin{proposition}[Theorem 1.3 in \cite{FAA}]\label{pq}
Let $\beta \in \Natural^n$, $x \in \Real^n$, $h =(h_1,\ldots,h_n )$, $h_i \in \Real^n  \setminus \{0\}$ and $g:\Real^n \rightarrow \Real$ be positive. Then
\begin{equation}\label{32}
 \frac{\Delta^\beta_h \left( \frac{1}{g}\right) (x)}{|h|^\beta} = \sum_{\scriptscriptstyle \beta^1 + \ldots + \beta^m = \beta} \frac{(-1)^m m!}{g^{m+1}(x)} \frac{\abs{\Delta_h^{\beta^1} g(x)} \ldots \abs{ \Delta_h^{\beta^m} g(x)} }{\abs{h}^\beta} + Err(h),
\end{equation}
where we sum with respect to $\beta^i \in \Natural^n \setminus \{0\}$ for $i=1,\ldots,m$, $m\in \Natural_+$ and the error term satisfies $Err(h) \rightarrow 0$ with $\abs{h}^\beta \rightarrow 0$.
\end{proposition}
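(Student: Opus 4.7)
The proof is essentially a direct application of Theorem 1.3 of \cite{FAA} to the composition $\phi\circ g$ with the specific choice $\phi(t):=1/t$. First I would compute $\phi^{(m)}(t) = (-1)^m m!/t^{m+1}$, so that evaluation at $t=g(x)$ produces the prefactor $(-1)^m m!/g^{m+1}(x)$ appearing under the summation in \eqref{32}. This identifies the external scalar factors and reduces the task to matching the combinatorial/difference structure on both sides.

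Next I would invoke the discrete Fa\`a di Bruno formula of \cite{FAA}, which expresses $\Delta_h^\beta (\phi\circ g)(x)$ as a sum over ordered decompositions $\beta = \beta^1 + \ldots + \beta^m$ of the multi-index $\beta$ into nonzero multi-indices, of terms of the form $\phi^{(m)}(g(x))\prod_{i=1}^m \Delta_h^{\beta^i} g(x)$, up to lower-order correction terms. Dividing by $|h|^\beta$ and using the decomposition $|h|^\beta=\prod_i |h|^{\beta^i}$ (which is legitimate precisely because the $\beta^i$ sum to $\beta$), every principal summand acquires the canonical shape $\Delta_h^{\beta^i}g(x)/|h|^{\beta^i}$ of a difference quotient, which is bounded under the standing regularity hypothesis on $g$.

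It then remains to collect the correction terms into $\mathrm{Err}(h)$ and verify $\mathrm{Err}(h)\to 0$ as $|h|^\beta\to 0$. These corrections arise because iterated difference operators involve shifts $x+kh$ for various integer combinations $k$: replacing $g(x+kh)$ by $g(x)$, or $\phi^{(m)}(g(x+kh))$ by $\phi^{(m)}(g(x))$, introduces a factor that vanishes with $h$ by continuity of $g$ and of $\phi^{(m)}\circ g$ (the latter being controlled by the positivity assumption $g>0$). Combined with the uniform boundedness of the remaining difference quotients, each correction contributes $o(1)$ as $|h|^\beta\to 0$.

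The main obstacle is purely bookkeeping: one must carefully match each summand produced by the abstract formula in \cite{FAA} with the corresponding summand on the right-hand side of \eqref{32}, verifying that the multinomial weights attached to ordered versus unordered decompositions agree, and packaging all shift-induced discrepancies into a single $\mathrm{Err}(h)$ with the claimed asymptotic. Once the expansion from \cite{FAA} is written down explicitly, the analytic content --- splitting $|h|^\beta$ and controlling $\mathrm{Err}(h)$ via continuity of $g$ --- is routine.
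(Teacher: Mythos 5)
The paper gives no proof of this proposition at all: it is imported as Theorem 1.3 of \cite{FAA}, with only the remark that the identity can be derived from the discrete Fa\`a di Bruno formula there, and your route --- specializing to $\phi(t)=1/t$ so that $\phi^{(m)}(t)=(-1)^m m!/t^{m+1}$, splitting $|h|^\beta=\prod_i |h|^{\beta^i}$ along each decomposition $\beta^1+\ldots+\beta^m=\beta$, and absorbing the discrepancy between the divided differences of $\phi$ appearing in the discrete formula and the derivatives $\phi^{(m)}(g(x))$ into $Err(h)$ --- is precisely that intended derivation. Your sketch is sound; the one point worth keeping explicit is that $Err(h)\to 0$ requires not just positivity of $g$ but continuity together with uniform boundedness of the difference quotients $\Delta_h^{\beta^i}g/|h|^{\beta^i}$, hypotheses omitted from the statement as given but satisfied in the paper's application, where $g(x)=\int_{B(x,r)}w$ with $w\in W^{k,\infty}_{loc}$.
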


An outcome of the above discussion is that we can represent the $\alpha$-th difference quotient of $f/g$ as the sum of fractions whose numerators, apart from constants, consist only of terms $\Delta^{\beta-\alpha}_h f(x+ \beta h)$, $ \Delta_h^{\beta^j} g(x) $ and their products for some $\beta^1 + \ldots + \beta^m = \beta \leq \alpha$. Furthermore, the operator $\Delta_h$ appears in each of these numerators exactly $\abs{\alpha}$-times, which can be justified by calculating the sum $\sum_{j=1}^k \abs{\beta_j} + \abs{\alpha-\beta}= \abs{\alpha} $.

We use difference quotients to prove regularity of strongly harmonic functions in Theorem \ref{reg}. Therefore, we gather below a characterization of Sobolev functions via their difference quotients.
\begin{theorem}[Theorem 3, p. 277 in \cite{E2}]\label{w1p}
Let $\Omega \subset \Real^n$ be an open set and $f \in L^p_{loc}(\Omega)$. Then $f \in W^{1,p}_{loc}(\Omega)$ if and only if for every $K \Subset \Omega$ there exists $C_K>0$ such that $\norm{\frac{\Delta_h f}{\abs{h}}}_{L^p(K)} \leq C_K$ holds for all $h$ such that $2 \abs{h} < \dist (K,\partial \Omega)$.
\end{theorem}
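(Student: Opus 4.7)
The plan is to prove both implications separately. For the forward direction, I would start with the case $f \in C^\infty(\Omega) \cap W^{1,p}_{loc}(\Omega)$. The fundamental theorem of calculus gives
\[ \Delta_h f(x) = f(x+h)-f(x) = \int_0^1 \nabla f(x+th)\cdot h\, dt, \]
and Minkowski's integral inequality applied on $K$ yields
\[ \left\| \frac{\Delta_h f}{|h|}\right\|_{L^p(K)} \leq \|\nabla f\|_{L^p(K^*)}, \qquad K^* := K + \overline{B}(0,|h|). \]
The hypothesis $2|h| < \dist(K,\partial\Omega)$ guarantees $K^* \Subset \Omega$, so the right-hand side is a finite constant $C_K$ depending only on $K$. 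For a general $f \in W^{1,p}_{loc}(\Omega)$, I would apply the estimate to the mollifications $f_\varepsilon := f * \eta_\varepsilon$ and pass to the limit using $f_\varepsilon \to f$ and $\nabla f_\varepsilon \to \nabla f$ in $L^p_{loc}(\Omega)$.

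For the converse, fix $K \Subset \Omega$ and a direction $e_i$. By hypothesis, for any sequence $t_k \to 0$ the functions $\Delta_{t_k e_i}f / t_k$ are bounded in $L^p(K)$. Assuming $1<p<\infty$, reflexivity and the Banach--Alaoglu theorem yield a subsequence converging weakly to some $g_i \in L^p(K)$; for $p=\infty$ one uses weak-$*$ compactness against $L^1$. The key step is to identify $g_i$ as $\partial_i f$ in the weak sense. For any $\varphi \in C_c^\infty(K)$, provided $t_k$ is small enough that the support of $\varphi(\cdot - t_k e_i)$ still lies inside $K$, a discrete change of variables gives
\[ \int_K \frac{\Delta_{t_k e_i}f(x)}{t_k}\,\varphi(x)\, dx = -\int_K f(x)\,\frac{\Delta_{-t_k e_i}\varphi(x)}{t_k}\, dx. \]
Since $\varphi \in C_c^\infty$, the quotient $\Delta_{-t_k e_i}\varphi/t_k \to -\partial_i\varphi$ uniformly on $K$, so passing to the limit against the weakly convergent left-hand side yields
\[ \int_K g_i\,\varphi\, dx = -\int_K f\, \partial_i\varphi \, dx. \]
Hence $g_i$ is the $i$-th weak partial derivative of $f$ on $K$, and since $i$ and $K$ were arbitrary, $f \in W^{1,p}_{loc}(\Omega)$.

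The main obstacle is the converse implication in the non-reflexive case $p=1$, where an $L^1$-bounded sequence need not admit a weakly convergent subsequence and the natural conclusion is only $f \in BV_{loc}(\Omega)$ rather than $W^{1,1}_{loc}(\Omega)$. The applications in the present paper, however, only require the reflexive range $1<p<\infty$ (together with $p=\infty$, handled by weak-$*$ compactness), for which the argument above closes without modification.
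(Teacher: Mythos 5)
The paper does not prove this statement at all: it is imported verbatim from Evans \cite{E2} (Theorem 3 in \S 5.8.2), so the only meaningful comparison is with the textbook proof --- and your argument is precisely that proof: fundamental theorem of calculus plus Minkowski's integral inequality (with mollification to remove the smoothness assumption) for the forward direction, and weak (respectively weak-$*$) compactness plus discrete integration by parts for the converse. Your treatment of the exceptional cases is also right, and in fact sharper than the paper's bare quotation: the converse genuinely fails for $p=1$ (one only gets $BV_{loc}$), which matches Evans' hypothesis $1<p<\infty$ in part (ii) and is harmless here because Proposition \ref{1p} assumes $1<p<\infty$, while the paper routes $p=\infty$ through the separate difference-quotient characterization of Proposition \ref{wkp}. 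Two small repairs. First, a sign slip: the correct summation-by-parts identity is $\int_K \frac{\Delta_{t_k e_i} f}{t_k}\,\varphi \,dx = +\int_K f\,\frac{\Delta_{-t_k e_i}\varphi}{t_k}\,dx$ (substitute $y=x+t_k e_i$; no minus sign appears), and since $\Delta_{-t_k e_i}\varphi/t_k \to -\partial_i \varphi$ uniformly, the limit is exactly your stated conclusion $\int_K g_i \varphi = -\int_K f\,\partial_i\varphi$; as written, your intermediate display and your conclusion are inconsistent by a sign, though the conclusion is the correct one. Second, in the forward direction your limit passage ``$\nabla f_\varepsilon \to \nabla f$ in $L^p_{loc}$'' is valid only for $p<\infty$; for $p=\infty$ one should instead use $\norm{\nabla f_\varepsilon}_{L^\infty(K^*)} \leq \norm{\nabla f}_{L^\infty(K^{**})}$ together with locally uniform convergence $f_\varepsilon \to f$ (licit since $W^{1,\infty}_{loc}$ functions are continuous). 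Finally, a point in your favor that the citation alone obscures: your explicit constant $C_K = \norm{\nabla f}_{L^p(K+\overline{B}(0,\abs{h}))}$ is exactly what the paper silently relies on when, in the proof of Proposition \ref{1p}, it invokes ``the uniform boundedness of constants $C_{\overline{B(x,r)}}$ with respect to $x\in K$.''
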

Moreover, in case $p=\infty$ we derive from Theorem 5 in \cite{EG} the following result.
\begin{proposition}\label{wkp}
Let $\Omega \subset \Real^n$ be an open set, $f:\Omega \rightarrow \Real$ and $k \in \Natural_+$. Then $f \in W^{k,\infty}_{loc}(\Omega)$ if and only if for all $K \Subset \Omega$ and all multi-indices $\alpha \in \Natural^n$ such that $\abs{\alpha} \leq k $ there exists $C_{K,\alpha} >0$ such that for all $t \in \Real^n$ with $t_i \ne 0$ and $2 \abs{\alpha_1 t_1 + \ldots + \alpha_n t_n} < \dist(K,\partial \Omega)$ there holds 
\[\norm{\frac{\Delta^\alpha_h f }{\abs{t}^\alpha} }_{L^\infty_{loc} (\Omega)} \leq C_{K,\alpha}, \]
where $\Delta^\alpha_h$ is defined in \eqref{dif}, $h = et=(t_1 e_1, t_2 e_2,\ldots,t_n e_n)$ and $e=(e_1,\ldots,e_n)$ is the standard basis of $\Real^n$.
\end{proposition}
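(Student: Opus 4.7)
The plan is to deduce the higher-order characterization from the first-order one (Theorem 5 in \cite{EG}, i.e.\ the case $k=1$) by induction on $k$, using the identity that expresses the iterated difference $\Delta_h^\alpha$ with $h=(t_1e_1,\ldots,t_ne_n)$ as an iterated integral of the mixed partial derivative $D^\alpha$. Throughout, the constraint $2|\alpha_1 t_1+\ldots+\alpha_n t_n|<\dist(K,\partial\Omega)$ is the exact bookkeeping required so that every evaluation point $x+\sum_j \beta_j t_j e_j$ with $\beta\le\alpha$ and $x\in K$ lies in $\Omega$; I will take this for granted.

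For the ``only if'' direction, assume $f\in W^{k,\infty}_{loc}(\Omega)$, and fix $K\Subset\Omega$ together with $|\alpha|\le k$. For a smooth $g$, an $\alpha_j$-fold iteration of the fundamental theorem of calculus in the $j$-th coordinate yields
\begin{equation*}
\Delta_h^\alpha g(x)\;=\;\int_{\prod_j[0,t_j]^{\alpha_j}} D^\alpha g\!\left(x+\sum_{j,l} s_j^{(l)} e_j\right) ds,
\end{equation*}
so $|\Delta_h^\alpha g(x)|\le |t|^\alpha \|D^\alpha g\|_{L^\infty}$ on the relevant neighborhood. Applying this to a standard mollification $f_\varepsilon$ and passing to the limit $\varepsilon\to 0$ (using $\Delta_h^\alpha f_\varepsilon\to\Delta_h^\alpha f$ pointwise a.e.\ together with the uniform bound $\|D^\alpha f_\varepsilon\|_{L^\infty(K')}\le\|D^\alpha f\|_{L^\infty(K'')}$ on slightly enlarged compacts) delivers the estimate with $C_{K,\alpha}=\|D^\alpha f\|_{L^\infty(K'')}$.

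For the ``if'' direction I would induct on $k$. The base case $k=1$ is precisely Theorem 5 in \cite{EG}. For the inductive step, the hypothesis applied to multi-indices of order $\le k-1$ combined with the inductive hypothesis gives $f\in W^{k-1,\infty}_{loc}(\Omega)$. To upgrade to $W^{k,\infty}_{loc}$, I need to identify a weak derivative $D^\alpha f\in L^\infty_{loc}$ for each $|\alpha|=k$. The family $\{\Delta_h^\alpha f/t^\alpha\}_h$ is norm-bounded by $C_{K,\alpha}$ in $L^\infty(K)$ and hence weak-$*$ precompact; pick a sequence $t\to 0$ along which weak-$*$ convergence holds to some $g$. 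Discrete summation by parts applied to $\varphi\in C^\infty_c(\Omega)$ gives
\begin{equation*}
\int_\Omega \frac{\Delta_h^\alpha f}{t^\alpha}\,\varphi\,dx \;=\; \int_\Omega f\cdot \frac{\Delta_{-h}^\alpha \varphi}{t^\alpha}\,dx \;\longrightarrow\; (-1)^{|\alpha|}\int_\Omega f\, D^\alpha\varphi\,dx,
\end{equation*}
where the limit on the right exploits the smoothness of $\varphi$. This identifies $g$ with the distributional derivative $D^\alpha f$, and simultaneously yields $\|D^\alpha f\|_{L^\infty(K)}\le C_{K,\alpha}$. Since $K\Subset\Omega$ and $|\alpha|\le k$ are arbitrary, $f\in W^{k,\infty}_{loc}(\Omega)$.

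The main obstacle I anticipate is uniqueness of the weak-$*$ limit in the inductive step: \emph{a priori} different sequences $t\to 0$ could yield different limits. The integration-by-parts identity above sidesteps this, because it forces every weak-$*$ limit to represent the same distribution $D^\alpha f$. All the remaining work---mollification in the ``only if'' direction and the domain bookkeeping induced by the condition on $\alpha_1 t_1+\ldots+\alpha_n t_n$---is routine and will be suppressed.
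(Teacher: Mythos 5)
Your proposal is correct and takes essentially the approach the paper intends: the paper omits the proof, stating only that it follows the same steps as Theorem 5 in \cite{EG}, and your argument is precisely that first-order scheme extended to order $k$ (fundamental theorem of calculus plus mollification for the forward direction; weak-$*$ compactness in $L^\infty$ combined with discrete summation by parts to identify the distributional derivative for the converse). As a minor remark, the induction on $k$ is dispensable, since your summation-by-parts identity identifies $D^\alpha f$ directly for every $\abs{\alpha}\le k$ and forces all weak-$*$ limits to coincide, exactly as you observe.
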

The proof of this result follows the same steps as the one in \cite{EG} and therefore we will omit it.
\section{Strongly harmonic functions on open subsets of $\Real^n$}

In this section we focus our attention on the class of strongly harmonic functions appearing in Definition~\ref{sh}. Let $(X,d,\mu)$ be a metric measure space with a Borel measure $\mu$. We denote by $\Hs (\Omega, d,\mu)$ the set of all strongly harmonic functions on an open domain $\Omega \subset X$. In what follows we will omit writing the set, metric or measure whenever they are clear from the context.

The key results of this section are Proposition \ref{1p} and Theorem \ref{reg}. There, we show the Sobolev regularity for functions in $\Hs (\Omega,d,\mu)$ for the weighted Lebesgue measure $d \mu = w dx$ depending on the Sobolev regularity of weight $w$. The properties of strongly and weakly harmonic functions were broadly studied in \cite{J,H} and in \cite{H1} in the setting of Carnot groups. Below, we list out some of those properties especially important for further considerations.

\begin{proposition}[Proposition 4.1 in \cite{J}]\label{prop41}
Suppose that measure $\mu$ is \emph{continuous with respect to metric}~$d$, i.e. for all $r>0$ and $x \in X$ there holds $ \lim_{d(x,y) \rightarrow 0} \mu\left( B(x,r)\triangle B(y,r) \right)=0,$ where we denote by \mbox{$E \triangle F := (E \setminus F ) \cup (E \setminus F)$} the symmetric difference of $E$ and $F$. Then $\Hs(\Omega,d,\mu)~\subset~C(\Omega)$.
\end{proposition}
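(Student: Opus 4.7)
The plan is to fix $x_0 \in \Omega$ and show that for any sequence $y_k \to x_0$, one has $u(y_k) \to u(x_0)$. First I would choose a radius $r>0$ that is admissible at $x_0$, i.e.\ $B(x_0, r) \Subset \Omega$. Since $\Omega$ is open and $B(x_0,r)$ is relatively compact in $\Omega$, for all sufficiently large $k$ the radius $r$ is also admissible at $y_k$, so the mean value identity from Definition~\ref{sh} applies at both $x_0$ and $y_k$:
\[
u(x_0) = \frac{1}{\mu(B(x_0,r))} \int_{B(x_0,r)} u \, d\mu, \qquad u(y_k) = \frac{1}{\mu(B(y_k,r))} \int_{B(y_k,r)} u \, d\mu.
\]

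Next I would estimate the difference by adding and subtracting a common term, writing
\[
 u(y_k) - u(x_0) = \frac{1}{\mu(B(y_k,r))}\left( \int_{B(y_k,r)} u\, d\mu - \int_{B(x_0,r)} u\, d\mu \right) + \left( \frac{1}{\mu(B(y_k,r))} - \frac{1}{\mu(B(x_0,r))} \right) \int_{B(x_0,r)} u \, d\mu.
\]
The difference of the integrals is bounded in absolute value by $\int_{B(y_k,r) \triangle B(x_0,r)} |u| \, d\mu$, while the denominator difference satisfies $|\mu(B(y_k,r)) - \mu(B(x_0,r))| \leq \mu(B(y_k,r) \triangle B(x_0,r))$. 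Both quantities involve the symmetric difference, which by the continuity assumption on $\mu$ tends to $0$ as $d(y_k,x_0) \to 0$.

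To conclude, I would invoke two facts. First, since $\mu(B(x_0,r))>0$ is fixed and $\mu(B(y_k,r)) \to \mu(B(x_0,r))$, the factors $1/\mu(B(y_k,r))$ are uniformly bounded for large $k$. Second, the symmetric differences are eventually contained in a fixed ball $B(x_0, 2r) \Subset \Omega$ on which $u \in L^1(\mu)$, so absolute continuity of the integral yields $\int_{B(y_k,r) \triangle B(x_0,r)} |u| \, d\mu \to 0$. Combining these estimates gives $u(y_k) \to u(x_0)$.

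The main obstacle is that $u$ is only assumed to be locally integrable, not bounded, so one cannot simply pull $\|u\|_{L^\infty}$ outside and use $\mu$-smallness of the symmetric difference directly; the argument must go through absolute continuity of the Lebesgue--Stieltjes integral associated with $\mu$ on a compact neighborhood of $x_0$. Everything else is bookkeeping built on the hypothesis $\mu(B(x_0,r) \triangle B(y_k,r)) \to 0$.
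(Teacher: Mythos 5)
Your argument is correct and is essentially the proof that the paper imports from \cite{J} (and that it reuses almost verbatim in the proof of Proposition \ref{1p}): the same add-and-subtract decomposition, with the integral difference and the measure difference both controlled by $\mu\left(B(y_k,r)\triangle B(x_0,r)\right)$, followed by absolute continuity of the integral to handle unbounded $u$. The only bookkeeping worth making explicit is the choice of radius: take $r$ to be half of an admissible radius at $x_0$, so that for large $k$ one has $B(y_k,r)\Subset\Omega$ (a closed subset of the compact set $\overline{B(x_0,2r)}$) and all the symmetric differences lie in this one fixed compact set on which $u\in L^1(\mu)$ — as stated, your claim that $r$ itself remains admissible at $y_k$ and that $B(x_0,2r)\Subset\Omega$ was not arranged, though this is immediate after the halving.
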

Moreover, the Harnack inequality and the strong maximum principle hold for strongly harmonic functions as well as the local H\"older continuity and even local Lipschitz continuity under more involved assumptions, see \cite{H}. It is important to mention here that similar type of problems were studied for a more general, nonlinear mean value property by Manfredi--Parvainen--Rossi and Arroyo--Llorente, see~\cite{man,LL1,LL2,LL3}.

We know that $\Hs$ is a linear space, but verifying by using the definition whether some function satisfies the mean value property might be a complicated computational challenge. From that comes the need for finding a handy characterization of class $\Hs$, or some necessary and sufficient conditions for being strongly harmonic.

Our goal is to characterize class $\Hs$ if $X = \Real^n$ equipped with a distance $d$ induced by a norm and a weighted Lebesgue measure $d\mu=w(x) dx$. 
\begin{center}
\textit{From now on we a priori assume that a function $w \in L_{loc}^1(\Omega)$ and $w>0$ almost everywhere in $\Omega$.}
\end{center}
Let us begin with noting that strongly harmonic functions in such setting are continuous.

\begin{proposition}
Let $\Omega \subset \Real^n$ be an open set. Then $\Hs(\Omega,d,w(x) dx) \subset C(\Omega).$
\end{proposition}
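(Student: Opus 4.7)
The plan is to invoke Proposition~\ref{prop41}, which reduces the task to verifying that the weighted measure $d\mu = w(x)\,dx$ is continuous with respect to the norm-induced metric $d$, i.e.\ that $\mu(B(x,r)\triangle B(y,r)) \to 0$ whenever $d(x,y)\to 0$.

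First I would exploit the norm structure: since $d$ is induced by a norm $\|\cdot\|$, every ball is a translate of a fixed convex body, $B(x,r) = x + rB(0,1)$. A short triangle-inequality argument then shows that if $\epsilon := d(x,y)$, then a point $z$ in the symmetric difference satisfies $r-\epsilon \leq d(z,x) < r+\epsilon$, so
\[ B(x,r)\triangle B(y,r) \;\subset\; B(x,\,r+\epsilon)\setminus \widebar{B(x,\,r-\epsilon)}, \]
and the Lebesgue measure of the right-hand annulus equals $\bigl[(r+\epsilon)^n - (r-\epsilon)^n\bigr]\,|B(0,1)|$, which tends to $0$ as $\epsilon\to 0$.

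Next I would pass from Lebesgue measure to $\mu$. Fix $x_0\in\Omega$ and $r_0>0$ with $\widebar{B(x_0,2r_0)}\Subset\Omega$. For $x$ close to $x_0$ and $r\leq r_0$, all the relevant symmetric differences lie in the compact set $K := \widebar{B(x_0,2r_0)}$. Since $w\in L^1_{loc}(\Omega)$, the set function $E\mapsto \int_{E\cap K} w\,dx$ is absolutely continuous with respect to Lebesgue measure on $K$, and hence the vanishing of $|B(x,r)\triangle B(y,r)|$ from the previous step upgrades to $\mu(B(x,r)\triangle B(y,r))\to 0$, which is exactly the hypothesis of Proposition~\ref{prop41}.

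I do not expect any serious obstacle: the only care needed is to localize everything inside a fixed compact subset of $\Omega$ so that local integrability of $w$ provides a uniform modulus of absolute continuity. Beyond that, the argument is a direct combination of two standard facts, namely that balls in a normed space are translates of a single convex body, and that $L^1$ densities induce absolutely continuous measures.
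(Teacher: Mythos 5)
Your proposal is correct, and it reaches the conclusion by the same overall reduction as the paper --- both verify the continuity-of-measure hypothesis of Proposition~\ref{prop41} --- but you verify it by a different, self-contained argument. The paper's proof is essentially a two-line citation: it observes that $\mu(\partial B(x,r)) = \int_{\partial B(x,r)} w\,dy = 0$ (spheres of norm balls are boundaries of convex bodies, hence Lebesgue-null) and then invokes Lemma~2.1 of \cite{J}, which converts the vanishing of measures of spheres into continuity of $\mu$ with respect to $d$. You instead reprove that lemma's conclusion directly in the present setting: the triangle inequality places $B(x,r)\triangle B(y,r)$ inside an annulus whose Lebesgue measure you compute exactly as $\bigl[(r+\epsilon)^n-(r-\epsilon)^n\bigr]\,\abs{B(0,1)}$ using the scaling of norm balls, and absolute continuity of the integral of the $L^1_{loc}$ density $w$ then upgrades this to $\mu(B(x,r)\triangle B(y,r))\to 0$. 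What each approach buys: the paper's route is shorter and applies verbatim in general metric measure spaces where spheres are null, while yours avoids the external lemma entirely and yields a quantitative modulus of continuity for the measure, at the price of using the specific normed structure (which is available here anyway). One cosmetic slip: a point of the symmetric difference can satisfy $d(z,x)=r-\epsilon$ exactly, so the correct containment is in $B(x,r+\epsilon)\setminus B(x,r-\epsilon)$ with the \emph{open} smaller ball removed; this costs nothing since the sphere $\{z: d(z,x)=r-\epsilon\}$ is Lebesgue-null, but it is worth stating. Also, your localization inside a compact $K\Subset\Omega$ is sound but not strictly necessary: the standing axiom that every ball has finite $\mu$-measure already gives $w\in L^1$ on a slightly enlarged ball containing all the symmetric differences in question.
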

\begin{proof}
Observe that $\mu(\partial B(x,r))= \int_{\partial B(x,r) } w(y) dy= 0$. The proof follows then by Lemma 2.1 from~\cite{J} and Proposition \ref{prop41}.\end{proof}

Let us observe that the proof of continuity of strongly harmonic functions works for all weights $w$. However, in order to show existence and integrability of weak derivatives we need to assume Sobolev regularity of $w$.

\begin{proposition}\label{1p}
Let $\Omega \subset \Real^n$ be an open set, $d$ be a norm induced metric and a weight $w \in W^{1,p}_{loc}(\Omega) \cap L^\infty_{loc}(\Omega)$ for some $1<p<\infty$. Then $ \Hs(\Omega,d,w ) \subset W^{1,p}_{loc}(\Omega).$
\end{proposition}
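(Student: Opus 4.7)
The plan is to apply the difference-quotient characterization from Theorem \ref{w1p}: it suffices to exhibit, for every compact $K \Subset \Omega$, a uniform bound $\norm{\Delta_h u/\abs{h}}_{L^p(K)} \leq C_K$ for all $h$ with $\abs{h}$ small. I will in fact establish the stronger pointwise Lipschitz estimate $\abs{u(x+h) - u(x)} \leq C_K\abs{h}$ on $K$, from which the $L^p$ bound is immediate.

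Fix $K$ and pick $r>0$ small enough that $K' := K + \overline{B(0,r)} \Subset \Omega$. Set $G(x) := \int_{B(x,r)} w(y)\,dy$. Since $w \in L^\infty_{loc}$ and $x \mapsto \mathbf{1}_{B(x,r)}$ is continuous in $L^1$, $G$ is continuous on $\Omega$; and $G>0$ because $w>0$ a.e., so $G \geq c > 0$ uniformly on $K'$. The mean value property at $x+h$ gives $[u(x+h)-u(x)]\,G(x+h) = \int_{B(x+h,r)}[u(y)-u(x)]\,w(y)\,dy$, while the same property at $x$ shows that the analogous integral over $B(x,r)$ vanishes. Subtracting these identities yields
\[
[u(x+h)-u(x)]\,G(x+h) = \int \bigl[\mathbf{1}_{B(x+h,r)}(y) - \mathbf{1}_{B(x,r)}(y)\bigr]\,[u(y)-u(x)]\,w(y)\,dy,
\]
an integral supported on the symmetric difference $B(x+h,r) \triangle B(x,r)$.

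Because $d$ is norm-induced, the triangle inequality yields $B(x+h,r) \setminus B(x,r) \subset B(x, r+\|h\|) \setminus B(x,r)$, of Lebesgue measure $[(r+\|h\|)^n - r^n]\,\abs{B(0,1)} = O(\|h\|) = O(\abs{h})$ by the equivalence of norms on $\Real^n$; the reverse piece is symmetric, so $\abs{B(x+h,r) \triangle B(x,r)} \leq C_r\abs{h}$. Since $u$ is continuous (by the previous proposition) and $w \in L^\infty_{loc}$, the integrand in the identity above is uniformly bounded on $K'$, so the right-hand side is bounded by $C\abs{h}$ pointwise on $K$. Dividing by $G(x+h) \geq c$ produces the Lipschitz bound, whence $\norm{\Delta_h u/\abs{h}}_{L^p(K)} \leq C_K\,\abs{K}^{1/p}$ uniformly in $h$, and Theorem \ref{w1p} concludes $u \in W^{1,p}_{loc}(\Omega)$. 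The principal technical point is the geometric estimate on the symmetric difference of two translated norm balls, which is where the norm-induced structure of $d$ is essential; the rest is a direct application of the mean value property and the continuity of $u$.
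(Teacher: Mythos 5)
Your proof is correct, but it takes a genuinely different route from the paper's. The paper compares the two normalized averages directly: it adds and subtracts the mixed term $\int_{B(x,r)}uw \big/ \int_{B(x+h,r)}w$, which splits $\Delta_h u$ into a symmetric-difference term (bounded via the estimate $\abs{B(x+h,r)\triangle B(x,r)}\leq \abs{h}\abs{\partial B(x,r)}$, cited from \cite{DS}) plus a second term proportional to $\int_{B(x,r)}\abs{\Delta_h w}$; that second term is precisely where the hypothesis $w\in W^{1,p}_{loc}$ enters, through Jensen's inequality and Theorem \ref{w1p} applied to $w$, and the conclusion is only an $L^p$ bound on $\Delta_h u/\abs{h}$. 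Your ``center-subtraction'' identity --- using the mean value property at $x$ to make $\int_{B(x,r)}(u-u(x))w$ vanish exactly, so that $[u(x+h)-u(x)]\,G(x+h)$ is an integral of the bounded function $(u-u(x))w$ over the symmetric difference alone --- eliminates the weight-variation term entirely. As a result you never use $w\in W^{1,p}_{loc}$: with only $w\in L^\infty_{loc}$, $w>0$ a.e. (which gives $G\geq c>0$ on compacts) and the continuity of $u$ from the preceding proposition, you obtain a pointwise local Lipschitz bound, i.e. $u\in W^{1,\infty}_{loc}$, which is strictly stronger than the stated conclusion and shows the Sobolev hypothesis on $w$ is superfluous for this proposition (this matches the Lipschitz regularity known from \cite{H} under symmetric-difference decay of the measure). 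Your elementary annulus inclusion $B(x+h,r)\setminus B(x,r)\subset B(x,r+\norm{h})\setminus B(x,r)$, valid because norm balls are translates and dilates of $B(0,1)$, also replaces the citation of \cite{DS}. What the paper's less sharp decomposition buys is a template that iterates: the two-term splitting, with the difference-quotient calculus of \eqref{ee0} and Proposition \ref{pq}, is what generalizes to the higher-order estimates of Theorem \ref{reg}, whereas your identity exploits the zeroth-order mean value cancellation and does not directly bootstrap to order $k\geq 2$. Two small bookkeeping points: you should take $\abs{h}$ small enough that $\overline{B(x+h,r)}\subset\Omega$ for all $x\in K$ (so the mean value property applies at $x+h$ with the same $r$, and the uniform bound on $u$ is taken on a slightly enlarged compact set), and note that the restriction to small $h$ is harmless in Theorem \ref{w1p} since for $\abs{h}$ bounded away from zero the quotient is trivially controlled by $\norm{u}_{L^p}$ on a neighborhood of $K$.
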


\begin{proof}
Fix a compact set $K \Subset \Omega$. Moreover, let $r=\frac{1}{4} \dist(K,\partial \Omega)$. Fix $h \in \Real^n$ with $|h| < r$. Denote by $K':= \{z \in \Omega: \dist(z,K)\leq 2r \}$. Let us observe that due to the first assertion of Lemma 2.1 in \cite{H} by continuity of the measure $\mu = w dx$ with respect to the metric $d$ there exist $0<m := \inf_{x \in K'} \mu(B(x,r))$. The difference quotient of $u$ at $x \in K$ reads
\begin{equation*}
\abs{\Delta_h u(x)} = \abs{u(x+h) - u(x) } = \abs{ \frac{\int_{B(x+h,r)} uw}{\int_{B(x+h,r)} w} -  \frac{\int_{B(x,r)} uw}{\int_{B(x,r)} w} },
\end{equation*}
where we used the mean value property. Now we add and subtract a term $\frac{\int_{B(x,r)} uw }{\int_{B(x+h,r)} w}$ and use the triangle inequality to get
\begin{equation}\label{ew01}
\abs{\Delta_h u(x)} \leq  \abs{\frac{\int_{B(x+h,r)} uw}{\int_{B(x+h,r)} w} -  \frac{\int_{B(x,r)} uw }{\int_{B(x+h,r)} w}} + \abs{\frac{\int_{B(x,r)} uw }{\int_{B(x+h,r)} w} - \frac{\int_{B(x,r)} uw}{\int_{B(x,r)} w} } .
\end{equation}
The first term can be estimated as follows
\begin{align}
\abs{\frac{\int_{B(x+h,r)} uw - \int_{B(x,r)} uw }{\int_{B(x+h,r)} w}} & \leq \frac{1}{\int_{B(x+h,r)} w} \abs{ \int_{B(x+h,r)} uw - \int_{B(x,r)} uw}  \leq  \frac{1}{m} \int\limits_{B(x+h,r) \triangle B(x,r) } \abs{uw} \nonumber \\ 
& \leq  \frac{\norm{uw}_{L^\infty(K')}}{m} \abs{ B(x+h,r) \triangle B(x,r)}, \label{33}
\end{align}
where the last term denotes the $n$-dimensional Lebesgue measure of the symmetric difference of two balls. To manage this term we refer to Theorem 3 in \cite{DS} to get that
\begin{equation}\label{35}
\abs{ B(x+h,r) \triangle B(x,r)} \leq \abs{h} \abs{\partial B(x,r)} = |h| c_n r^{n-1},
\end{equation}
where in the last term $c_n$ stands for the $(n-1)$-dimensional Lebesgue measure of the unit sphere.

The second term of \eqref{ew01} reads
\begin{align}
\abs{ \frac{\int_{B(x,r)} uw }{\int_{B(x+h,r)}w} - \frac{\int_{B(x,r)} uw}{\int_{B(x,r)} w}} & \leq \frac{\int_{B(x,r)} \abs{uw} }{ \int_{B(x+h,r)} w \int_{B(x,r)} w} \abs{\int_{B(x+h,r)} w(y) dy - \int_{B(x,r)} w(y) dy } \nonumber \\
& \leq \frac{\norm{uw}_{L^\infty(K')} }{m^2} \abs{\int_{B(x,r)} (w(y+h)-w(y)) dy } \nonumber \\ 
&\leq \frac{\norm{uw }_{L^\infty(K')} }{m^2} \int_{B(x,r)} \abs{\Delta_h w(y) } dy. \label{34}
\end{align}
By gathering together both terms of \eqref{ew01} we obtain the following
\begin{align*}
\int\limits_K \left( \frac{\abs{\Delta_h u(x)}}{|h|} \right)^p dx & \leq 2^{p-1}\norm{uw}^p_{L^\infty(K')} \int\limits_K  \Bigg\lbrack \frac{ c_n^{p} r^{p(n-1)}}{m^p}  + \frac{ 1 }{m^{2p}}  \left( \int_{B(x,r)} \frac{\abs{\Delta_h w(y) }}{|h|} dy \right)^p \Bigg\rbrack  dx.
\end{align*}
The first term is bounded, therefore we only need to take care of the second one. For the sake of simplicity we omit writing the constant $2^{p-1} m^{-2p}    \norm{uw}^p_{L^\infty(K')}$ and use the Jensen inequality
\[\int\limits_K \left( \int\limits_{B(x,r)}  \frac{\abs{\Delta_h w(y) }}{|h|} \right)^p dy  \, dx  \leq \abs{B(0,1)}^p r^{np} \int\limits_K \int\limits_{B(x,r)}  \left( \frac{\abs{\Delta_h w(y) }}{|h|} \right)^p  dy \, dx < \infty.\]
This integral is finite by the assumptions on regularity of $w$ and Theorem \ref{w1p} applied to weight $w$ with an observation, that from the proof of Theorem \ref{w1p} follows the uniform boundedness of constants $C_{\overline{B(x,r)}}$ with respect to $x \in K$. Therefore, by applying this theorem again to $u$ we obtain that $u \in W^{1,p}_{loc}(\Omega)$.
\end{proof}

We prove higher regularity of strongly harmonic functions by using difference quotients characterization of the space $W^{k,\infty}_{loc}$ presented in Proposition \ref{wkp}.

\begin{theorem}\label{reg}
Let $\Omega \subset \Real^n$ be an open set, $d$ be a norm induced metric and a weight $w \in W^{l,\infty}_{loc}(\Omega)$ for some $l \in \Natural_+ $. Then $ \Hs(\Omega,d,w ) \subset W^{l,\infty}_{loc}(\Omega).$
\end{theorem}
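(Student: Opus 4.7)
The plan is induction on $l$, using Proposition \ref{wkp} to convert Sobolev regularity into uniform control of mixed difference quotients. The base case $l=1$ already lives inside the proof of Proposition \ref{1p}: the pointwise bound obtained there for $|\Delta_h u(x)|/|h|$ reduces to a geometric term proportional to $c_n r^{n-1}$ plus $\int_{B(x,r)} |\Delta_h w|/|h|$, both of which are uniformly bounded on $K \Subset \Omega$ whenever $w \in W^{1,\infty}_{loc}$. So I fix $l \geq 2$, assume the statement for $l-1$, and aim to prove it for $l$.

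Fix $K \Subset \Omega$ and an admissible radius $r>0$. By Proposition \ref{wkp} it suffices, for every multi-index $\alpha$ with $|\alpha| = l$ and every increment $h=(t_1 e_1,\ldots,t_n e_n)$ of the allowed form, to bound $\|\Delta_h^\alpha u/|h|^\alpha\|_{L^\infty(K)}$ uniformly; orders $|\alpha| \leq l-1$ are handed to me by the induction hypothesis. Setting $F(x) := \int_{B(x,r)} u w$ and $G(x) := \int_{B(x,r)} w$, the mean value identity reads $u(x)\,G(x) = F(x)$. Applying the discrete Leibniz rule to the product $uG$ and isolating the $\beta = 0$ term gives
\begin{equation*}
G(x)\,\Delta_h^\alpha u(x) \;=\; \Delta_h^\alpha F(x) \;-\; \sum_{0 < \beta \leq \alpha} \binom{\alpha}{\beta}\,\Delta_h^{\alpha-\beta} u(x+\beta h)\,\Delta_h^\beta G(x).
\end{equation*}
As in Proposition \ref{1p}, Lemma 2.1 of \cite{H} together with positivity of $w$ yields $G \geq m > 0$ on $K$, so after division by $G(x)|h|^\alpha$ I need only bound each right-hand factor, scaled by the appropriate power of $|h|$, uniformly in $h$. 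In every summand $|\alpha-\beta| \leq l-1$, so the induction hypothesis controls the $u$-factor; the $G$-factor is handled by writing $G = w \ast \chi_{B(0,r)}$, so that $\Delta_h^\beta G = (\Delta_h^\beta w) \ast \chi_{B(0,r)}$ for $|\beta| \leq l$, and invoking Proposition \ref{wkp} for $w \in W^{l,\infty}_{loc}$.

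The main obstacle, and the only step specific to order $l$, is the term $\Delta_h^\alpha F/|h|^\alpha$. A naive integral bound on $F = (uw) \ast \chi_{B(0,r)}$ would require $uw \in W^{l,\infty}_{loc}$, while the induction and the Leibniz product rule only provide $uw \in W^{l-1,\infty}_{loc}$. I plan to recover the missing order from the BV-character of the kernel $\chi_{B(0,r)}$: choose $i$ with $\alpha_i \geq 1$, decompose $\alpha = \alpha' + e_i$ with $|\alpha'| = l-1$, and exploit the convolution identity $\Delta_h^\alpha(f \ast g) = (\Delta_h^{\alpha'} f) \ast (\Delta_{t_i e_i} g)$ to obtain
\begin{equation*}
\Delta_h^\alpha F(x) \;=\; \bigl(\Delta_h^{\alpha'}(uw)\bigr) \ast \bigl(\Delta_{t_i e_i}\chi_{B(0,r)}\bigr)(x).
\end{equation*}
The second factor is the signed indicator of $B(0,r) \triangle B(-t_i e_i, r)$, whose $L^1$-norm is at most $c_n r^{n-1}|t_i|$ by Theorem 3 of \cite{DS} (the same bound already used in \eqref{35}). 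Proposition \ref{wkp} applied to $uw \in W^{l-1,\infty}_{loc}$ together with Young's convolution inequality then yields $|\Delta_h^\alpha F(x)| \leq C\,|h|^{\alpha'}\,|t_i| = C\,|h|^\alpha$, which is exactly the bound needed. Combining this with the estimates of the previous paragraph gives the required uniform control of $\Delta_h^\alpha u/|h|^\alpha$, and one further application of Proposition \ref{wkp} yields $u \in W^{l,\infty}_{loc}$, closing the induction.
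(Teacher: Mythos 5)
Your argument is correct, and it shares the paper's overall skeleton — induction on the order, base case extracted from the estimates in Proposition \ref{1p} (formulas \eqref{ew01}, \eqref{33}, \eqref{34}), and reduction of the inductive step to the difference-quotient criterion of Proposition \ref{wkp} — but your algebraic decomposition is genuinely different and cleaner. The paper treats $u = F/G$ as a quotient and expands $\Delta_h^\alpha(F/G)$ via the discrete Leibniz formula \eqref{ee0} combined with the discrete Fa\'a di Bruno formula for $\Delta_h^\beta(1/G)$ (Proposition \ref{pq}), which produces products of the form $\Delta_h^{\alpha-\beta}F(x+\beta h)\prod_i \Delta_h^{\beta^i}G(x)$ over powers of $G$, together with an error term in \eqref{32} whose treatment is left somewhat implicit. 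You instead apply the product rule to $uG = F$, isolate the $\beta=0$ term to solve for $G(x)\,\Delta_h^\alpha u(x)$, and divide by $G \geq m > 0$ (justified exactly as in Proposition \ref{1p} via Lemma 2.1 of \cite{H}); this bypasses Proposition \ref{pq} and its error term entirely, at no cost. The analytic heart is identical in both proofs: the only genuinely order-$l$ object is $\Delta_h^\alpha F$, and both you and the paper handle it by peeling off a single coordinate difference $\Delta_{t_i e_i}$, paying for it with the symmetric-difference volume bound \eqref{35} from \cite{DS}, and charging the remaining $l-1$ differences to $uw \in W^{l-1,\infty}_{loc}$, available from the induction hypothesis together with $w \in W^{l,\infty}_{loc}$. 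Your convolution formulation $\Delta_h^\alpha F = \bigl(\Delta_h^{\alpha'}(uw)\bigr) \ast \bigl(\Delta_{t_i e_i}\chi_{B(0,r)}\bigr)$ with Young's inequality is a tidy repackaging of the paper's direct computation, and writing $F = (uw) \ast \chi_{B(0,r)}$ is legitimate because balls of a norm-induced metric are symmetric about their centers. The only bookkeeping you should make explicit is the standard enlargement of compact sets: the shifted points $x + \beta h$ and the convolution supports must remain in a fixed $K' \Subset \Omega$, which holds for $|h|$ small exactly as in the paper's choice of $K'$ and $r$.
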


\begin{proof}
Let $u \in \Hs(\Omega,d,w)$ and $w$ be as in the assumptions. We will show the assertion using the mathematical induction with respect to $k \leq l$ proving that $u \in W^{k,\infty}_{loc}(\Omega)$. 

Let $k=1$ and $K,K',r,h,m$ be as in the proof of Proposition \ref{1p}. The following is the consequence of \eqref{ew01}, \eqref{33} and \eqref{34}
\begin{equation*}
\frac{\abs{\Delta_h u(x)}}{\abs{h}} \leq \norm{uw}_{_{L^\infty (K')}} \left( \frac{c_n r^{n-1}}{m} + \frac{\int_{B(x,r)} \abs{\Delta_h w}}{\abs{h} m^2} \right) \leq \norm{uw}_{_{L^\infty (K')}} \left( \frac{c_n r^{n-1}}{m} + \frac{\abs{B(0,1)}r^n \norm{ \frac{\Delta_h w}{\abs{h}}}_{L^\infty(K')}  }{m^2} \right)
\end{equation*}
and is bounded by Proposition \ref{wkp}.

Now let us assume that $u \in W^{k-1,\infty}_{loc} (\Omega)$ and choose $K$, $r$ as above, \mbox{$K':= \{z \in \Omega: \dist(z,K)\leq 2r \}$}. We consider the $\alpha$-th order difference quotient of $u$ for $\abs{\alpha} =k$. Let $t\in \Real^n$ be such that \mbox{$|\alpha_1t_1 + \ldots + \alpha_n t_n|< \frac{r}{2k}$} and define $h = (t_1 e_1, \ldots, t_n e_n)$. Proposition \ref{pq}, formula \eqref{ee0} and the related discussion applied to \sloppy ${f(x) = \int_{B(x,r)} uw}$ and $g(x) = \int_{B(x,r)} w$ allows us to reduce the discussion to showing that 
\[ \frac{ \left( \Delta_h^{\alpha - \beta} \int_{B(x+\beta h,r)} uw \right) \, \prod\limits_{i=0}^m \left( \Delta_h^{\beta^i} \int_{B(x,r)} w \right)}{|t|^\alpha} \] 
is bounded for any $\beta^1 + \ldots + \beta^m = \beta \leq \alpha$, cf. \eqref{32}. Let us observe that \[ \frac{\Delta_h^{\beta^i} \int_{B(x,r)} w }{\abs{t}^{\beta^i}} = \int_{B(x,r)} \frac{\Delta_h^{\beta^i} w }{\abs{t}^{\beta^i}} \] is bounded by the regularity of $w$ and Proposition \ref{wkp}. Therefore, we only need to show boundedness of the term $\abs{t}^{\beta -\alpha}\Delta_h^{\alpha - \beta} \int_{B(x+\beta h,r)} uw $. We deal with the case $\beta = 0$, since for $\abs{\beta}>0$ the difference quotient of order $\abs{\alpha - \beta} \leq k-1$ is bounded due to $uw \in W^{k-1,\infty}$. Observe that there exists $j$ such that $\alpha_j \ne 0$ and $\Delta_h^\alpha =\Delta_{t_je_j} \Delta_h^{\alpha - e_j}$, hence
\begin{align*}
\frac{1}{\abs{h}^\alpha}\abs{ \Delta^\alpha_h \int_{B(x,r)} uw}  & = \frac{1}{\abs{h}^\alpha} \abs{ \Delta_{t_j e_j} \int_{B(x,r)} \Delta^{\alpha - e_j}_h uw(y) dy} \\ & = \frac{1}{\abs{h}^\alpha} \abs{\int_{B(x+t_j e_j,r)} \Delta^{\alpha-e_j}_h uw(y) dy -\int_{B(x,r)} \Delta^{\alpha-e_j}_h uw(y) dy }\\
& \leq \frac{1}{\abs{t_j}} \int_{B(x+t_j e_j,r) \triangle B(x,r) } \frac{\abs{\Delta^{\alpha-e_j}_h uw(y) }}{\abs{h}^{\alpha-e_j}}dy \leq c_n r^{n-1}  \norm{\frac{ \Delta^{\alpha-e_j}_h uw }{\abs{h}^{\alpha-e_j}}}_{L^\infty(K)},
\end{align*}
which is bounded on behalf of the regularity assumption on both $u$ and $w$ (in the last estimate we have also used \eqref{35}). Therefore, we conclude that $u\in W^{l,\infty}_{loc}(\Omega)$, which ends the proof.
\end{proof}

\subsection{Proof of Theorem \ref{twsys}}

The consequences of the mean value property has been studied in the 1960's by Flatto \cite{F2,F3}, Friedman--Litmann \cite{F5} and Bose \cite{B2,B3,B4}. Subsequently, their work was extended by Zalcman \cite{F}. Below, we briefly discuss these results. According to our best knowledge, the investigation in this area originate from a work by Flatto \cite{F2}. He considered functions with the following property:

Let us fix an open set $\Omega \subset \Real^n$ and a bounded set $K \subset \Real^n$. Moreover, let $\mu$ be a probabilistic measure on $K$ such that all continuous functions on $K$ are $\mu$-measurable and for all hyperplanes $V \subset R^n$ it holds that $ \mu(K \cap V) <1$, i.e. $\mu$ is not concentrated on a hyperplane. We will say that a continuous function $u \in C(\Omega)$ has the mean value property in the sense of Flatto, if
\begin{equation}\label{fmean}
u(x) = \int\limits_K u(x+ry) d\mu(y)
\end{equation}
for all $x \in \Omega$ and $0 < r < r(x) $. Let us observe that for $K = B(0,1)$ and $\mu$ being the normalized Lebesgue measure on $K$, property \eqref{fmean} is equivalent to the strong harmonicity of $u$ in $\Omega$ by the following formula
\begin{equation}\label{obs}
u(x)= \vint{B(x,r)} u(z) dz = \vint{B(0,1)} u(x+r y ) dy = \int_K u(x+r y ) d\mu(y).
\end{equation}
This holds exactly for homogeneous and translation invariant metrics, because only then 
\[B(x,r)=x+r \cdot B(0,1) , \]
where we understand the above equality in the sense of the Minkowski's addition and multiplication, see~\cite{OO}. For such distance functions one can obtain any ball $B(x,r)$ from $B(0,1)$ by using the change of variables $ y=\frac{z-x}{r}.$ In relation to homogeneous and translation invariant distance let us recall the following lemma, which is likely a part of the mathematical folklore. However, in what follows we will not appeal to this observation.
\begin{lemma}
If $d$ is a translation invariant and homogeneous metric on $\Real^n$, then there exists a norm $\| \cdot \|$ on $\Real^n$ such that for all $x,y \in \Omega$ there holds that $d(x,y) = \| x-y \| $. 
\end{lemma}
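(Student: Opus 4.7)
The natural candidate for the norm is
\[
\|x\| := d(x,0), \qquad x \in \Real^n,
\]
so the plan is simply to verify that this $\|\cdot\|$ satisfies the three axioms of a norm and that it recovers $d$.

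First I would check positivity and definiteness: $\|x\| = d(x,0) \geq 0$ is immediate from the metric axioms, with equality exactly when $x=0$. Next, for absolute homogeneity I would invoke the assumed homogeneity of $d$ with $y = 0$: since $\lambda \cdot 0 = 0$,
\[
\|\lambda x\| = d(\lambda x,0) = d(\lambda x,\lambda \cdot 0) = |\lambda|\, d(x,0) = |\lambda|\,\|x\|
\]
for every $\lambda \in \Real$ and every $x \in \Real^n$.

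The triangle inequality is where translation invariance enters. Using $d(x+y,y) = d(x,0)$ together with the triangle inequality for $d$, I would write
\[
\|x+y\| = d(x+y,0) \leq d(x+y,y) + d(y,0) = d(x,0) + d(y,0) = \|x\| + \|y\|.
\]
Finally, translation invariance gives $d(x,y) = d(x-y, 0) = \|x-y\|$, which is the desired identification.

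The argument is essentially a direct unpacking of the two hypotheses, so there is no real obstacle; the only point worth stating carefully is that translation invariance is used twice (once to obtain the triangle inequality, once to obtain the representation $d(x,y)=\|x-y\|$), while homogeneity is used solely to establish the scaling law of the norm.
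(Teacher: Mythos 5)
Your proof is correct and is exactly the standard folklore argument; the paper in fact states this lemma \emph{without} proof, remarking only that it is ``likely a part of the mathematical folklore'' and never invoking it later, so your verification supplies precisely the omitted details. The only point worth flagging is that if ``homogeneous'' is read as $d(\lambda x,\lambda y)=\lambda\, d(x,y)$ for $\lambda \geq 0$ only, then absolute homogeneity of $\|\cdot\|$ needs the extra one-line observation $\|-x\| = d(-x,0) = d(0,x) = d(x,0) = \|x\|$, which follows from translation invariance (translate by $x$) together with the symmetry of the metric $d$.
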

%
%
%
%

We present also a characterization of all such metrics on $\Real^n$ by using the Minkowski functional, see~\cite{OO}. From now on we use the word \emph{symmetric} to describe a set $K\subset \Real^n$ which is symmetric with respect to the origin of the coordinate system. For any nonempty convex set $K$ we consider the Minkowski functional.
\begin{lemma}[p.54 in \cite{OO}]\label{mink}
Suppose that $K$ is a symmetric convex bounded subset of $\Real^n$, containing the origin as an interior point. Then, its Minkowski functional $n_K$ defines a norm on $\Real^n.$ Moreover, if $\|\cdot \|$ is a norm on $\Real^n$, then the Minkowski functional $n_K$, where $K$ is a unit ball with respect to $\| \cdot \|$, is equal to that norm.
\end{lemma}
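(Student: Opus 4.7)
The plan is to verify the three norm axioms for the Minkowski functional
\[ n_K(x) := \inf\{ t > 0 : x \in tK \} \]
and then reverse-engineer the identity in the second claim.

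First I would establish that $n_K$ is well-defined and finite everywhere. Since $0$ is an interior point of $K$, there is some $r>0$ with $B_{e}(0,r) \subset K$ in the Euclidean sense, so for any $x\in\Real^n$ we have $x \in tK$ whenever $t \geq |x|/r$, giving $n_K(x) \leq |x|/r < \infty$. Conversely, boundedness of $K$ yields $R>0$ with $K \subset B_e(0,R)$, so $x \in tK$ forces $t \geq |x|/R$, and in particular $n_K(x) \geq |x|/R$. This gives $n_K(x) > 0$ for $x \neq 0$, while $0 \in K$ yields $n_K(0)=0$, so the definiteness axiom holds.

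Next, absolute homogeneity. For $\lambda > 0$ the substitution $s = t/\lambda$ gives $n_K(\lambda x) = \lambda n_K(x)$ directly. For $\lambda < 0$ one uses symmetry $-K = K$, which lets us rewrite $\{t>0 : \lambda x \in tK\} = \{t>0 : |\lambda| x \in tK\}$, so $n_K(\lambda x) = |\lambda| n_K(x)$. For the triangle inequality, fix $\varepsilon > 0$ and pick $a < n_K(x)+\varepsilon$, $b < n_K(y)+\varepsilon$ with $x \in aK$ and $y \in bK$ (here I would note that $x/a \in K$ whenever $a > n_K(x)$, by picking $t<a$ with $x/t \in K$ and writing $x/a = (t/a)(x/t) + (1-t/a)\cdot 0 \in K$ using convexity and $0\in K$). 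Then the convex combination
\[ \frac{x+y}{a+b} = \frac{a}{a+b}\cdot\frac{x}{a} + \frac{b}{a+b}\cdot\frac{y}{b} \]
lies in $K$ by convexity, so $n_K(x+y) \leq a + b < n_K(x) + n_K(y) + 2\varepsilon$, and letting $\varepsilon \to 0$ finishes this step.

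For the second assertion, assume $\|\cdot\|$ is a norm and $K = \{x : \|x\| \leq 1\}$. Then $x/t \in K$ iff $\|x\|/t \leq 1$ iff $t \geq \|x\|$, so
\[ n_K(x) = \inf\{t > 0 : t \geq \|x\|\} = \|x\|. \]
The main technical point worth stating carefully is the closedness-under-rescaling observation used in the triangle inequality (that $a > n_K(x)$ implies $x/a \in K$), since it relies on both convexity of $K$ and the fact that $0 \in K$; everything else is a direct verification.
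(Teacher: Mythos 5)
Your proof is correct. The paper offers no proof of its own here --- the lemma is quoted directly from Schneider (p.~54 in \cite{OO}) --- and your verification of the norm axioms, including the careful observation that $a > n_K(x)$ implies $x/a \in K$ via convexity and $0 \in K$, is precisely the standard argument for this classical fact.
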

\begin{example}\label{uwmink}
Among many examples of norm induced metrics on $\Real^n$ are $l^p$ distances for $1\leq p \leq \infty$. Moreover, let us fix numbers $a_i>0$ for $i=1,\ldots,n$, set $a:=(a_1,\ldots,a_n)$ and $1\leq p< \infty$ and define
\[ \| x\|_p^a := \left( \sum_{i=1}^p \left( \frac{|x_i|}{a_i} \right)^p \right)^{\frac{1}{p}} .\]
In case $p=2$ all balls with respect to $\| \cdot \|_p^a$ are ellipsoids with the length of semi-axes equal to $a_i$ in $x_i$'s axes direction respectively.

Let us observe that by Lemma \ref{mink} there is the injective correspondence between norms on $\Real^n$ and a class of all symmetric convex open bounded subsets $K$ of $\Real^n$. More specifically, every $K$ defines a norm on $\Real^n$ through the Minkowski functional and vice versa, given a norm on $\Real^n$ the unit ball $B(0,1)$ is a symmetric convex open bounded set, therefore provides an example of $K$. This can be expressed in one more way, namely that all norms can be distinguished by their unit balls, so to construct a norm we only need to say what is its unit ball. Therefore, further examples of norms can be constructed for any $n$-dimensional symmetric convex polyhedron $K$. All balls with respect to $n_K$ will be translated and dilated copies of $K$.
\end{example}

The formula \eqref{obs} is true only if the measure of a ball scales with the power $n$ of its radius, the same which appears in the Jacobian from the change of variables formula. This is true only for measures which are constant multiples of the Lebesgue measure. Note that \eqref{fmean} does not coincide, in general, with the one presented in our work, since the Flatto's mean value is calculated always with respect to the same fixed reference set $K$ and measure $\mu$, whose support is being shifted and scaled over $\Omega$. Whereas, in Definition \ref{sh} the measure is defined on the whole space, and as $x$ and $r$ vary, the mean value is calculated with respect to different weighted measures. Indeed, from the point of view of Flatto, the condition from Definition \ref{sh} reads
\[ u(x) = \int_{X} u(y) \frac{d \mu |_{B(x,r)}}{\mu(B(x,r))}. \]
This mean value property cannot be written as an integral with respect to one fixed measure for different pairs of $x$ and $r$, even when \eqref{obs} holds.

Flatto discovered that functions satisfying \eqref{fmean} are solutions to a second order elliptic equation, see \cite{F2}. However, from the point of view of our discussion, more relevant is the following later result.

\begin{theorem}[Friedman--Littman, Theorem 1 in \cite{F5}]\label{F-L}
Suppose that $u$ has property \eqref{fmean} in $\Omega \subset \Real^n$. Then $u$ is analytic in $\Omega$ and satisfies the following system of partial differential equations
\begin{equation}\label{system}
 \sum_{|\alpha|=j} A_\alpha D^\alpha u =0 \qquad \text{for } j=1,2,\ldots
\end{equation}
The coefficients $A_\alpha$ are moments of measure $\mu$ and are defined by $A_\alpha := \binom{\abs{\alpha}}{\alpha} \int_K x^\alpha d\mu(x)$. Moreover, any function $u \in C^\infty (\Omega)$ solving system \eqref{system} is analytic and has property \eqref{fmean}.
\end{theorem}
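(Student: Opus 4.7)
The plan is to bootstrap from continuity to analyticity by mollification, extract the system \eqref{system} via Taylor matching once enough regularity is in hand, and then reverse the argument for the converse. First I would regularize $u$ by convolution with a standard mollifier $\phi_\epsilon$ supported in $B(0,\epsilon)$, setting $u_\epsilon := u * \phi_\epsilon$ on $\Omega_\epsilon := \{x \in \Omega : \dist(x,\partial \Omega) > \epsilon\}$. Since \eqref{fmean} is linear in $u$ and holds pointwise, Fubini's theorem transfers the Flatto property to $u_\epsilon$ on $\Omega_\epsilon$ (for $r$ small enough), so $u_\epsilon \in C^\infty(\Omega_\epsilon)$ also satisfies \eqref{fmean}. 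Taylor-expanding $u_\epsilon(x+ry)$ to order $N$, integrating against $d\mu(y)$, and using $\mu(K)=1$ produces
\[ 0 = \sum_{j=1}^{N} r^j \sum_{\abs{\alpha}=j} \frac{1}{\alpha!} D^\alpha u_\epsilon(x) \int_K y^\alpha \, d\mu(y) + o(r^N), \]
valid on an interval of admissible $r$. Successively dividing by $r, r^2,\ldots$ and sending $r \to 0$ forces every coefficient to vanish, which (after multiplying by $j!$) yields
\[ \sum_{\abs{\alpha}=j} A_\alpha D^\alpha u_\epsilon = 0 \quad \text{on } \Omega_\epsilon, \qquad j \geq 1, \]
with $A_\alpha = \binom{\abs{\alpha}}{\alpha} \int_K y^\alpha \, d\mu(y)$.

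Next I would exploit the $j=2$ equation. Its principal symbol is
\[ \sum_{\abs{\alpha}=2} A_\alpha \xi^\alpha = 2 \int_K (y \cdot \xi)^2 \, d\mu(y), \]
which is strictly positive for $\xi \ne 0$ precisely because $\mu$ is not concentrated on any hyperplane through the origin. Hence $u_\epsilon$ is annihilated by a constant-coefficient strongly elliptic operator of order two, and by Petrowsky's analytic hypoellipticity theorem $u_\epsilon$ is real analytic on $\Omega_\epsilon$. Interior elliptic estimates deliver locally uniform bounds on all derivatives of $u_\epsilon$ as $\epsilon \to 0$; combined with uniform convergence $u_\epsilon \to u$ on compacta, this transfers analyticity to $u$ on $\Omega$ and shows that $u$ itself satisfies the full system \eqref{system}.

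For the converse, let $u \in C^\infty(\Omega)$ solve \eqref{system}; the same ellipticity argument applied directly to $u$ shows $u$ is real analytic. For each $x \in \Omega$ and $r$ small enough that $x + rK \Subset \Omega$ lies inside the radius of convergence of the Taylor series of $u$ at $x$, term-by-term integration gives
\[ \int_K u(x+ry) \, d\mu(y) = u(x) + \sum_{j \geq 1} \frac{r^j}{j!} \sum_{\abs{\alpha}=j} A_\alpha D^\alpha u(x) = u(x), \]
by \eqref{system} and $\mu(K)=1$, so \eqref{fmean} holds. The main obstacle is the regularity bootstrap: one must check carefully that convolution preserves \eqref{fmean} on the shrunken subdomain and that the coefficient-matching argument is legitimate despite the dependence of the admissible $r$ on $x$. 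Once ellipticity of the $j=2$ equation is secured, Petrowsky's theorem and classical interior elliptic estimates take care of everything else.
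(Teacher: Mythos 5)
Your proposal reconstructs what is essentially the classical Friedman--Littman argument. Note that the paper does not prove Theorem~\ref{F-L} at all: it is quoted from \cite{F5}, and the remark following it records precisely the feature you exploit, namely that the $j=2$ equation of \eqref{system} is elliptic with constant coefficients, whence analyticity. What the paper proves by its own methods are the two neighbouring generalizations: Theorem~\ref{twsys} is obtained via Fourier transform and the Parseval identity (a choice forced by the fact that there the weight has only Sobolev regularity, so pointwise Taylor matching is unavailable), while the converse Theorem~\ref{twod} is proved through Zalcman's generalized Pizzetti formula (Theorem~\ref{twza}), which is exactly your term-by-term integration of the Taylor series packaged as a citation. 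Your ellipticity computation, with symbol $\int_K (y\cdot\xi)^2\,d\mu(y)$ positive for $\xi\ne 0$ because $\mu$ is not concentrated on a hyperplane, is the constant-weight case of the paper's Lemma~\ref{lem4}. So your route is more elementary than the paper's Fourier argument and faithful to the cited source; your converse is sound as written.

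The genuine soft spot is the one you flagged but did not repair: the mollification step. Property \eqref{fmean} holds for $0<r<r(x)$ with $r(x)$ a priori only a positive function of $x$, and to write $u_\epsilon(x)=\int_K u_\epsilon(x+ry)\,d\mu(y)$ by Fubini you need one radius $r>0$ admissible simultaneously at every point $x-z$ with $|z|\le\epsilon$; nothing in the hypothesis rules out $\inf_{|z|\le\epsilon} r(x-z)=0$, in which case no such $r$ exists and your first step fails. There are two standard repairs. If $r(x)$ is read as the maximal admissible radius (the reading consistent with how the paper uses \eqref{fmean} in \eqref{obs}, where it is identified with strong harmonicity), admissible radii are locally uniformly bounded below and Fubini goes through verbatim. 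Under the liberal reading one needs a Baire category argument: since $u$ is continuous, the sets $F_N:=\bigl\{x: u(x)=\int_K u(x+ry)\,d\mu(y)\text{ for all }0<r\le 1/N\bigr\}$ (intersected with the region where $x+\frac{1}{N}K\subset\Omega$) are relatively closed and exhaust $\Omega$, so by Baire there is a dense open subset on which radii are locally uniform; one obtains analyticity and \eqref{system} there and must then propagate to all of $\Omega$, which requires additional work as in \cite{F5}. Two harmless slips: the $j=2$ symbol is $\int_K(y\cdot\xi)^2\,d\mu(y)$ without your factor $2$; and for the passage $\epsilon\to 0$ it is cleaner to observe that after a linear change of coordinates the $j=2$ equation becomes Laplace's equation, so $u$ is a locally uniform limit of harmonic functions, hence harmonic and analytic, and the full system passes to the limit since $D^\alpha u_\epsilon=(D^\alpha u)*\phi_\epsilon$.
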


\begin{remark}
Theorem \ref{F-L} gives full characterization of $\Hs(\Omega,d)$ for $d$ being induced by a norm. Theorem 3.1 in \cite{F2} states that all functions having property \eqref{fmean} are harmonic with respect to variables obtained from $x$ by using an orthogonal transformation and dilations along the axes of the coordinate system. On the other hand the proof of Theorem 1 in \cite{F5} shows that the equation in system \eqref{system} corresponding to $j=2$ is always elliptic with constant coefficients from which the analyticity follows.
\end{remark}

Flatto as well as Friedman and Littman described in their works the space of functions possessing property \eqref{fmean}. We present appropriate results below.

\begin{proposition}[Friedman--Littman, Theorem 2 in \cite{F5}]\label{FL2}
The space of solutions to system \eqref{system} is finitely dimensional if and only if the system of algebraic equations $\sum_{|\alpha|=j} A_\alpha z^\alpha = 0$ for $j=1,2,\ldots$ has the unique solution $z=(z_1,\ldots,z_n)=0$, where $z_i \in \mathbb{C}$.
\end{proposition}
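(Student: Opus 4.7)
The plan is to use the standard correspondence between constant-coefficient homogeneous PDE systems and homogeneous polynomial ideals, together with the analyticity of solutions from Theorem \ref{F-L}.

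For the easy direction, suppose the algebraic system admits a solution $z_0 \in \mathbb{C}^n \setminus \{0\}$. Since each $P_j(z) := \sum_{|\alpha|=j} A_\alpha z^\alpha$ is homogeneous of degree $j$, we have $P_j(t z_0) = t^j P_j(z_0) = 0$ for every $t \in \mathbb{C}$, so the whole complex line $\mathbb{C} z_0$ lies in the common zero set. Each exponential $u_t(x) := e^{tz_0 \cdot x}$ then satisfies $P_j(D) u_t = P_j(t z_0) u_t = 0$ for all $j$, so $u_t$ is a solution of \eqref{system}. Taking real and imaginary parts (since we want real-valued solutions) and varying $t$ over an infinite subset of $\mathbb{R}$ produces an infinite linearly independent family, so the solution space is infinite-dimensional.

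For the converse, assume $z=0$ is the only common zero in $\mathbb{C}^n$. Let $I \subset \mathbb{C}[z_1,\ldots,z_n]$ be the ideal generated by $\{P_j\}_{j\geq 1}$. Since $I$ is homogeneous and $V(I) = \{0\}$, Hilbert's Nullstellensatz (applied to the irrelevant maximal ideal) gives an integer $N$ such that every monomial $z^\gamma$ with $|\gamma| \geq N$ belongs to $I$. Next, by Theorem \ref{F-L} every solution $u$ of \eqref{system} is analytic; expanding around any point (after translation, say $0$) gives $u = \sum_{k \geq 0} u_k$ with $u_k$ a homogeneous polynomial of degree $k$. Because each $P_j(D)$ is homogeneous of degree $j$, the relation $P_j(D) u = 0$ splits by degree and yields $P_j(D) u_k = 0$ for every $j,k$. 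Thus each $u_k$ is itself a homogeneous polynomial solution of the system.

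The heart of the argument is to show that there are no nonzero homogeneous polynomial solutions of degree $k \geq N$; this is where I expect the only real work. I would employ the Fischer apolarity pairing $\langle x^\alpha, x^\beta\rangle := \alpha!\,\delta_{\alpha\beta}$ on homogeneous polynomials of a fixed degree, under which the adjoint of multiplication by $P(x)$ is $P(D)$. Consequently, for each $k \geq N$ the space of homogeneous degree-$k$ solutions equals the Fischer-orthogonal complement of $I_k$, the degree-$k$ piece of $I$. Since $I \supset (z_1,\ldots,z_n)^N$, we have $I_k$ equal to the whole space of degree-$k$ homogeneous polynomials for $k \geq N$, so its orthogonal complement is trivial. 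Therefore $u_k = 0$ for all $k \geq N$, $u$ is a polynomial of degree $< N$, and the total solution space embeds into the finite-dimensional space of polynomials of degree $< N$. The main obstacle is pinning down the Fischer-duality step cleanly; once that identification is in place, the Nullstellensatz and the homogeneous decomposition of analytic solutions combine in a straightforward way to finish the proof.
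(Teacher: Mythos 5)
Your proposal is correct, but a direct comparison with the paper is not possible here: the paper states Proposition \ref{FL2} without proof, quoting it from Friedman--Littman \cite{F5}, so the benchmark is the classical argument there. Your easy direction (a nonzero common zero $z_0$ gives, by homogeneity, the whole line $\mathbb{C}z_0$ of common zeros, hence the exponentials $e^{tz_0\cdot x}$ and their real and imaginary parts span an infinite-dimensional solution space) is exactly the standard route, and your converse begins the same way as the classical one, with the Nullstellensatz applied to the homogeneous ideal $I=(P_1,P_2,\ldots)$ with $V(I)=\{0\}$ to get $(z_1,\ldots,z_n)^N\subset I$. Where you diverge is in the finishing step: you decompose an analytic solution into homogeneous parts and invoke Fischer apolarity, under the bilinear pairing $[f,g]:=f(D)g(0)$, to show that the degree-$k$ homogeneous solutions form the orthogonal complement of $I_k$, which is trivial for $k\geq N$. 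This is correct as sketched --- the pairing is nondegenerate on each graded piece (it is diagonal with entries $\alpha!$ on monomials) and multiplication by $P$ is adjoint to $P(D)$ --- so the step you flag as the ``main obstacle'' does go through. However, it is heavier machinery than needed: once $z^\gamma\in I$ for every $\abs{\gamma}\geq N$, you can write $z^\gamma=\sum_j Q_{j,\gamma}(z)P_j(z)$ and apply the corresponding operators directly to any solution $u$ of \eqref{system} to get
\begin{equation*}
D^\gamma u=\sum_j Q_{j,\gamma}(D)\,P_j(D)u=0,\qquad \abs{\gamma}\geq N,
\end{equation*}
so $u$ is outright a polynomial of degree less than $N$, with no homogeneous decomposition, no Fischer pairing, and no appeal to analyticity (the identity holds even for distributional solutions). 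This shortcut is essentially how \cite{F5} concludes, and it is also what Remark \ref{uw6} alludes to (finite dimension forces all solutions to be polynomials). One small point worth making explicit in either version: the final embedding into polynomials of degree $<N$ uses that $\Omega$ is connected (a domain, as in \cite{F5}); on an open set with infinitely many components the solution space would be trivially infinite-dimensional.
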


\begin{remark}\label{uw6}
From the proof of Proposition \ref{FL2} it follows that if there exists a nonpolynomial solution to~\eqref{system}, then the solution space is infinitely dimensional. If the dimension finite, then all strongly harmonic functions are polynomials.
\end{remark}

A rather different approach to the mean value property and its consequences was studied by Bose, see \cite{B2,B3,B4}. He considered strongly harmonic functions on $\Omega \subset \Real^n$ equipped with non-negatively weighted measure $\mu = w(x) dx$, for a weight $w \in L_{loc}^1(\Omega)$ being a.e. positive in $\Omega$ and only a metric $d$ induced by the $l_2$-norm. Under the higher regularity assumption of weight $w$, Bose proved the following result.
\begin{theorem}[Bose, Theorem 1 in \cite{B4}]\label{twbo}
If there exists $l \in \Natural$ such that $w \in C^{2l+1}(\Omega)$, then every $u \in \Hs(\Omega,w)$ solves the following system of partial differential equations
\begin{equation}\label{system2}
\Delta u \Delta^j w  + 2 \nabla u \nabla \left( \Delta^j w \right)=0, \qquad \text{for } j=0,1,\ldots,l,
\end{equation}
where $\Delta^j$ stands for the $j$th composition of the Laplace operator $\Delta$. If $w$ is smooth, then equations \eqref{system2} hold true for all $j \in \Natural$.
\end{theorem}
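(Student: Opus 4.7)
The plan is to Taylor-expand both sides of the mean-value identity in the radius $r$ and match coefficients. Strong harmonicity reads
\[
u(x)\int_{B(x,r)} w(y)\, dy \;=\; \int_{B(x,r)} u(y)w(y)\, dy, \qquad B(x,r)\Subset\Omega.
\]
Since the metric is Euclidean, $B(x,r) = x + r\,B(0,1)$, so the substitution $y = x+rz$ followed by cancelling the common factor $|B(0,1)|\,r^n$ produces the pointwise identity
\[
u(x)\,\vint{B(0,1)} w(x+rz)\, dz \;=\; \vint{B(0,1)} (uw)(x+rz)\, dz
\]
valid for every admissible $r>0$; both sides are functions of $r$ alone, which opens the door to a coefficient comparison.

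I would next invoke the classical Pizzetti-type expansion on the Euclidean unit ball. By symmetry of $B(0,1)$ under each reflection $z_i\mapsto -z_i$, every Taylor monomial $z^\alpha$ with at least one odd exponent integrates to zero, and the surviving even monomials reassemble into iterated Laplacians,
\[
\vint{B(0,1)} f(x+rz)\, dz \;=\; \sum_{j=0}^{k} \gamma_{n,j}\, r^{2j}\, \Delta^j f(x) + o(r^{2k}), \qquad \gamma_{n,j}>0,\ \gamma_{n,0}=1,
\]
valid for $f$ of sufficient classical regularity near $x$. To apply this with $f = uw$, Theorem \ref{reg} is essential: the hypothesis $w\in C^{2l+1}(\Omega)\subset W^{2l+1,\infty}_{loc}(\Omega)$ yields $u\in W^{2l+1,\infty}_{loc}(\Omega)$, so $uw$ carries enough Sobolev regularity to make sense of the expansion (after a routine mollification turning Sobolev control into pointwise Peano remainders).

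Matching coefficients of $r^{2j}$ in the mean-value identity and using positivity of $\gamma_{n,j}$ produces
\[
u(x)\,\Delta^j w(x) \;=\; \Delta^j(uw)(x), \qquad j=1,2,\ldots,l,
\]
(the $j=0$ case being the tautology $uw=uw$). I would then convert these into Bose's form by a short induction on $j$ using the Leibniz rule $\Delta(fg) = f\,\Delta g + g\,\Delta f + 2\,\nabla f\cdot\nabla g$. Assuming inductively $\Delta^{j-1}(uw)=u\,\Delta^{j-1}w$, applying one further Laplacian gives
\[
\Delta^j(uw) \;=\; \Delta\bigl(u\,\Delta^{j-1}w\bigr) \;=\; u\,\Delta^j w + 2\,\nabla u\cdot\nabla(\Delta^{j-1}w) + \Delta u\cdot\Delta^{j-1}w,
\]
so comparison with $\Delta^j(uw)=u\,\Delta^j w$ delivers precisely equation \eqref{system2} at index $j-1$. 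Running $j$ from $1$ to $l$ yields all the equations \eqref{system2} up to index $l-1$; the top index $j=l$ requires one further Taylor coefficient, obtained in the smooth case by iterating the argument to all orders, and in the $C^{2l+1}$ case by a refinement of the remainder analysis.

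The main obstacle I expect is the regularity bookkeeping: Theorem \ref{reg} delivers only Sobolev, not classical, differentiability of $u$, so justifying the pointwise Pizzetti expansion with its Peano remainder for the product $uw$ calls for either a mollification-and-pass-to-limit argument or reading the intermediate identities in an $L^\infty_{loc}$ sense. Once this is handled, the remainder of the argument is finite-dimensional: positivity of $\gamma_{n,j}$ enables coefficient extraction, and the Leibniz rule closes the induction cleanly.
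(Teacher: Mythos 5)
Your route is sound in outline and genuinely different in mechanics from what the paper does. The paper never proves Bose's theorem directly (it is quoted from \cite{B4}); instead it recovers it as a special case by proving the general system \eqref{system3} through a Fourier--Parseval argument (proof of Theorem \ref{twsys}) and then, in Section \ref{ldwa}, computing the coefficients $A_\alpha$ via Gamma functions to show that for Euclidean balls the order-$2j$ operator collapses to a positive multiple of $\Delta^j$, i.e. $\Delta^j(uw)=u\,\Delta^j w$; the concluding Leibniz-rule induction there is exactly yours. Your replacement of the Fourier step by a finite-order Pizzetti expansion with pointwise coefficient matching is legitimate: by Theorem \ref{reg}, $u\in W^{2l+1,\infty}_{loc}=C^{2l,1}_{loc}$, so $uw$ is classically $C^{2l}$ and the Peano remainder $o(r^{2l})$ (indeed $O(r^{2l+1})$) needs no mollification, and matching is done at each fixed $x$, so no uniformity issue arises --- your worry on this point is overcautious. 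What the paper's Fourier route buys instead is freedom from pointwise Taylor remainders: both sides of its identity \eqref{eq4} are entire in $r$ regardless of the regularity of $w$, which is why it works under the weaker hypothesis $w\in W^{2l,\infty}_{loc}$ and produces weak solutions. (Pizzetti expansion is precisely the tool the paper reserves for the converse direction, Theorem \ref{twod}, where analyticity makes the full series converge.)

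The genuine gap is the top index $j=l$, and your closing remark that it follows ``by a refinement of the remainder analysis'' conceals a real obstruction, not a technicality. Matching up to order $r^{2l}$ yields $\Delta^j(uw)=u\,\Delta^j w$ for $j\le l$, and your induction converts the identity at index $j+1$ into Bose's equation at index $j$, hence delivers \eqref{system2} only for $j\le l-1$. To reach index $l$ this way you would need the coefficient of $r^{2l+2}$, i.e. $\Delta^{l+1}(uw)$ and $\Delta^{l+1}w$ pointwise --- objects that need not exist when $w\in C^{2l+1}$ and $u\in C^{2l,1}_{loc}$; no sharpening of the $o(r^{2l})$ estimate can manufacture a Taylor coefficient of order $2l+2$ that the function may not possess. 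Note that the equation at index $l$ itself, $\Delta u\,\Delta^l w+2\nabla u\cdot\nabla(\Delta^l w)=0$, involves only $2l+1$ derivatives of $w$: this is exactly why Bose's hypothesis is the odd order $C^{2l+1}$, and his argument spends that single extra derivative to extract the top equation by a device other than matching the next even Taylor coefficient. It is telling that the paper's own machinery has the same off-by-one: Theorem \ref{twsys} under $w\in W^{2l,\infty}_{loc}$ yields \eqref{system3} only up to $j=2l$, hence \eqref{system2} only up to index $l-1$. So your proposal proves the theorem in the smooth case and all but the last equation in the $C^{2l+1}$ case; closing the endpoint requires either Bose's original argument or a distributional version of the matching in which the surplus derivatives are unloaded onto a test function.
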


The converse is not true for smooth weights in general, see counterexamples on p. 479 in \cite{B2}. Furthermore, Bose proved in \cite{B4} the following result, by imposing further assumptions on $w$.

\begin{proposition}[Bose]\label{bosod}
Let $l \in \Natural$ and $w \in C^{2l}(\Omega)$. Suppose that there exist $a_0,\ldots,a_{l-1} \in \Real$ such that
\[ \Delta^l w = a_0 w + a_1 \Delta w + \ldots + a_{l-1} \Delta^{l-1} w.\]
Then any solution $u$ to \eqref{system2} for $j=0,1,\ldots,l-1$ is strongly harmonic, that is $u \in \Hs(\Omega,w)$.
\end{proposition}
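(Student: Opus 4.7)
The plan is to use \eqref{system2} --- reshuffled via the Leibniz rule --- to force $\Delta^k(uw) = u\,\Delta^k w$ for every $k\in\Natural$, then to exploit the algebraic hypothesis to promote $w$ (and hence $u$) to an analytic function, and finally to conclude strong harmonicity via the classical Pizzetti expansion.

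First I would rewrite the $j$-th equation of \eqref{system2}. Applying $\Delta(fg) = f\Delta g + 2\nabla f\cdot\nabla g + g\Delta f$ with $f = u$, $g = \Delta^j w$ gives
\[ \Delta u\cdot\Delta^j w + 2\nabla u\cdot\nabla(\Delta^j w) \;=\; \Delta(u\,\Delta^j w) - u\,\Delta^{j+1} w, \]
so \eqref{system2} is equivalent to $\Delta(u\,\Delta^j w) = u\,\Delta^{j+1} w$ for $j = 0,1,\ldots,l-1$. Iterating this chain starting from $\Delta^0(uw) = uw$ produces $\Delta^k(uw) = u\,\Delta^k w$ for $k = 0,1,\ldots,l$. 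I would extend this to every $k\in\Natural$ by induction on $k$: the algebraic hypothesis gives a scalar linear recurrence that expresses $\Delta^k w$, for any $k\geq l$, as a combination $\sum_{j=0}^{l-1} c_{k,j}\,\Delta^j w$, and then $\Delta^{k+1}(uw) = \Delta(u\,\Delta^k w) = \sum_j c_{k,j}\,\Delta(u\,\Delta^j w) = \sum_j c_{k,j}\,u\,\Delta^{j+1} w = u\,\Delta^{k+1} w$ by linearity.

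Next I would establish analyticity. Writing $P(\lambda) = \lambda^l - \sum a_j\lambda^j$, the hypothesis reads $P(\Delta)w = 0$, a constant-coefficient linear PDE of order $2l$ with elliptic principal symbol $|\xi|^{2l}$, so Petrovsky's theorem on constant-coefficient elliptic operators forces $w\in C^\omega(\Omega)$. Equation \eqref{system2} at $j=0$ then reads $w\Delta u + 2\nabla u\cdot\nabla w = 0$, a second-order linear equation with real-analytic coefficients that is elliptic wherever $w>0$; Morrey's analytic-hypoellipticity theorem yields $u\in C^\omega$ on $\{w>0\}$, which is open and dense in $\Omega$ by analyticity of $w$ together with the a.e.\ positivity assumption.

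Finally I would invoke Pizzetti's formula for analytic $f$,
\[ \vint{B(x,r)} f(y)\,dy = \sum_{k=0}^\infty \frac{\Gamma(n/2+1)}{k!\,4^k\,\Gamma(n/2+k+1)}\,r^{2k}\,\Delta^k f(x), \]
convergent for $r$ less than the radius of convergence of $f$ at $x$. Applied separately to $f = uw$ and $f = w$ at a point $x$ with $w(x)>0$, and using $\Delta^k(uw)(x) = u(x)\Delta^k w(x)$ for every $k$, the two series match term-by-term after multiplying the second by $u(x)$, yielding $\int_{B(x,r)} uw\,dy = u(x)\int_{B(x,r)} w\,dy$ for small $r$. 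Real-analyticity of both sides in $r$ extends this to all admissible radii, and continuity of $u$ (as a $C^2$ classical solution of \eqref{system2}) carries the identity across the nowhere-dense zero set of $w$. The main obstacle is the analyticity step: the bare hypothesis $w\in C^{2l}$ is too weak on its own, and it is precisely the algebraic relation that converts $w$ into the solution of a constant-coefficient elliptic PDE, thereby unlocking Pizzetti.
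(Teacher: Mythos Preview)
Your proof is correct and follows essentially the same route the paper takes: the paper does not prove Proposition~\ref{bosod} directly (it is cited from Bose), but it remarks that the result is recovered from Theorem~\ref{twod} via the analyticity lemma for solutions of $P(\Delta)w=0$, and then Theorem~\ref{twod} itself is proved exactly by applying the (generalized) Pizzetti expansion to $w$ and to $uw$ and matching terms---precisely your plan.

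The differences are cosmetic. You invoke Petrovsky and Morrey for analyticity of $w$ and then of $u$; the paper instead factors $P(\Delta)$ inductively into first-order factors $\Delta+\lambda$ and appeals to the standard elliptic-regularity-with-analytic-data result (equation~\eqref{daur}), and then proves ellipticity of the $j=2$ equation directly (Lemma~\ref{lem4}). You use the classical Euclidean Pizzetti formula; the paper uses Zalcman's generalized version (Theorem~\ref{twza}), which of course specializes to yours when $d=l^2$. The one point you make more explicit than the paper is the recursive step: from the $l$ equations of \eqref{system2} you first derive $\Delta^k(uw)=u\,\Delta^k w$ for $k\le l$, and then use the recurrence $\Delta^l w=\sum a_j\Delta^j w$ to propagate this identity to all $k\in\Natural$. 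The paper's statement that Theorem~\ref{twod} ``generalizes'' Proposition~\ref{bosod} leaves this bridge implicit, so your treatment is in fact cleaner here. Your caveat about the zero set of $w$ is harmless---in the paper's setting (and in Theorem~\ref{twod}) the weight is assumed positive on $\Omega$, so the issue does not arise.
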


The following result by Bose contributes to the studies of the dimension of space $\Hs(\Omega,w)$.
\begin{proposition}[Bose, Corollary 2 in \cite{B2}]
Suppose that $\Omega \subset \Real^n$ for $n>1$ and there exists $\lambda \in \Real$ such that $ \Delta w = \lambda w.$ Then $\text{dim} \, \Hs(\Omega,w) = \infty$.
\end{proposition}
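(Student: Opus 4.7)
The plan is to reduce Bose's system \eqref{system2} to a single second order equation via the eigenfunction hypothesis, transform that equation into the Helmholtz-type equation $\Delta v = \lambda v$ by the substitution $v = wu$, and then exhibit infinitely many linearly independent solutions of the latter, using crucially the assumption $n > 1$.

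Since $\Delta w = \lambda w$, an immediate induction gives $\Delta^j w = \lambda^j w$ for every $j \in \Natural$, and iterated elliptic regularity implies that $w$ is analytic (so all the smoothness hypotheses of Theorem \ref{twbo} and Proposition \ref{bosod} are met). Substituting into \eqref{system2} collapses every equation to a constant multiple of the one for $j=0$, namely
\[ w\, \Delta u + 2\, \nabla u \cdot \nabla w = 0. \]
Theorem \ref{twbo} shows that every $u \in \Hs(\Omega, w)$ satisfies this PDE, while Proposition \ref{bosod} applied with $l=1$ and $a_0 = \lambda$ gives the converse. Hence $\Hs(\Omega, w)$ coincides with the $C^2$ solution space of this single equation.

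Next I would set $v := wu$ and apply the product rule:
\[ \Delta v = u\, \Delta w + 2\, \nabla u \cdot \nabla w + w\, \Delta u = \lambda w u + \bigl(w\, \Delta u + 2\, \nabla u \cdot \nabla w\bigr) = \lambda v. \]
Since $w > 0$ in $\Omega$, the map $u \mapsto wu$ is a linear bijection, with inverse $v \mapsto v/w$, between $\Hs(\Omega, w)$ and $\{v \in C^2(\Omega) : \Delta v = \lambda v\}$. It therefore suffices to show that the latter space is infinite-dimensional when $n \geq 2$.

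For this, for any $k \in \mathbb{C}^n$ satisfying $k \cdot k = \lambda$ (unconjugated bilinear pairing), the exponential $v_k(x) := e^{k \cdot x}$ fulfills $\Delta v_k = (k \cdot k)\, v_k = \lambda v_k$, and its real and imaginary parts are real-valued solutions. The complex quadric $\{k \in \mathbb{C}^n : k \cdot k = \lambda\}$ has complex dimension $n - 1 \geq 1$, so one can extract a countable sequence of $k_i$ producing distinct exponents; classical linear independence of exponentials with distinct frequencies on any non-empty open set then yields infinitely many independent solutions. The main delicate point is precisely this last step: one must choose the $k_i$ so that after taking real parts the resulting real-valued solutions are still independent, and this splits naturally into the sub-cases $\lambda > 0$ (pick real $k$ on the sphere $|k|^2 = \lambda$), $\lambda < 0$ (pick purely imaginary $k$ on $|\eta|^2 = -\lambda$, giving trigonometric solutions), and $\lambda = 0$ (the space already contains all harmonic polynomials, hence is infinite-dimensional directly).
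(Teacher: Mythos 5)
The paper states this proposition without proof, importing it verbatim from Bose (Corollary 2 in \cite{B2}), so there is no internal argument to compare against; judged on its own, your proof is correct and is a clean, self-contained derivation from results the paper does state. The reduction is sound: $\Delta^j w = \lambda^j w$ collapses system \eqref{system2} to the single equation $w\,\Delta u + 2\,\nabla u \cdot \nabla w = 0$ (for $\lambda = 0$ the equations with $j \geq 1$ become trivial, which your phrasing covers), Theorem \ref{twbo} gives the inclusion $\Hs(\Omega,w) \subset \{u : w\,\Delta u + 2\,\nabla u \cdot \nabla w = 0\}$, and Proposition \ref{bosod} with $l=1$, $a_0 = \lambda$ gives the converse. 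The substitution $v = wu$ — which is exactly the identity $\Delta(uw) = u\,\Delta w + (w\,\Delta u + 2\,\nabla u\cdot\nabla w)$ that the paper itself exploits in Section 4.1 — turns this into $\Delta v = \lambda v$, and your case analysis on the sign of $\lambda$ (real exponentials on the sphere $|k|^2 = \lambda$, trigonometric solutions for $\lambda < 0$, harmonic polynomials for $\lambda = 0$) correctly produces infinitely many linearly independent real solutions, with the hypothesis $n > 1$ entering precisely where it must: for $n = 1$ the quadric is finite and $\{v : v'' = \lambda v\}$ is two-dimensional.

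Two small points would merit an explicit sentence each. First, the inverse map $v \mapsto v/w$ needs $w > 0$ \emph{everywhere}, whereas the paper's standing assumption is only $w > 0$ a.e.; this is rescued by your own regularity remark, since $w$ is then continuous, hence $w \geq 0$, and a nonnegative solution of $\Delta w = \lambda w$ vanishing at an interior point vanishes identically on the component (strong maximum principle for $\lambda \geq 0$; minimum principle for nonnegative superharmonic functions for $\lambda < 0$), contradicting $w > 0$ a.e. Second, applying Theorem \ref{twbo} and Proposition \ref{bosod} requires the classical regularity of $w$ stated there; your appeal to elliptic regularity for $\Delta w = \lambda w$ supplies it, provided you say in what sense the eigenvalue equation is initially understood (e.g.\ $w \in C^2$, or distributionally) so that elliptic regularity is actually applicable. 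Neither gap is structural; both are one-line repairs.
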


\begin{remark}\label{rem1}
Before we present the proof of Theorem \ref{twsys} let us discuss the equations of system \eqref{system3}. First of all, by Remark \ref{uwmink} we know that the set $B(0,1)$ is symmetric with respect to the origin. If $|\alpha|$ is an odd number, then $x^\alpha$ is an odd function, hence $A_\alpha = 0$. Therefore only evenly indexed equations of \eqref{system3} are nontrivial, although we will prove them for all $j \leq 2l$. In fact, the proof of Theorem \ref{twsys} can be applied to functions with the mean value property over any compact set $K \subset \Real^n$, which does not necessarily need to be a unit ball with respect to a norm on $\Real^n$, i.e. to functions with the following property
\[ u(x) = \frac{1}{\int_K w(x+ry) dy}\int_{K} u(x+ry)w(x+ry) dy,\]
which holds for all $x \in \Omega$ and $0<r<r(x)$. In that case we obtain also oddly indexed equations of system~\eqref{system3} are nontrivial. 

If the unit ball is symmetric with respect to all coordinate axes, the coefficient $A_\alpha$ is zero whenever some $\alpha_i$ is odd. Therefore, in the $j$-th equation of \eqref{system3} occur only differential operators acting evenly on each of variables. Examples of norms for which $B(0,1)$ is symmetric with respect to all coordinate axes include the $l^p$ norms for $p \in [1,\infty]$, but also by Lemma \ref{mink} one can produce more examples.
\end{remark}


We are now in a position to present the proof of Theorem \ref{twsys}.

\begin{proof}[Proof of Theorem \ref{twsys}]
Let $u \in \Hs(\Omega,d,wdx)$ be as in assumptions of Theorem \ref{twsys}. Then, for $x \in \Omega$ and $0<r<\text{dist}(x,\partial \Omega)$ as in \eqref{obs} there holds
\[ u(x) \int\limits_{B(x,r)} w(y) dy = u(x) \int\limits_{B(0,1)} w(x+ry)r^n dy=\int\limits_{B(0,1)} u(x+ry)w(x+ry) r^n dy =\int\limits_{B(x,r)} u(y)w(y)dy.\]
Without the loss of generality we may assume that \[ B(0,1)=\{x : d(x,0) <1 \} \subset \{x: \|x\|_{2} \leq 1 \},\] since we can always restrict the set of admissible radii in the mean value property. The assertion is a local property, therefore we may restrict our considerations to the analysis of the behavior of $u$ on a ball $B \subset \Omega$ with $\text{dist}(B,\partial \Omega) =2 \varepsilon>0$. Furthermore, let $B'$ be a ball concetric with $B$ with $\varepsilon$ distance from $\partial \Omega$. We redefine $u$ and $w$ in the following way
\begin{equation*}
\widebar{u}(x)= u(x) \chi_{B'}(x) \qquad \widebar{w}(x) =  w(x) \chi_{B'}(x).
\end{equation*}

The function $\widebar{u}$ and the weight $\widebar{w}$ are both in $W^{2l,\infty}(B)$ since $B \Subset B'$. Let $\varphi \in C^\infty_0 (B)$. Then for all $x \in B$, $y \in B(0,1)$ and $0<r<\varepsilon$ it holds $u(x+ry)=\bar{u}(x+ry)$. Since $\varphi(x)=0$ outside of $B$ we have that for all $x \in \Real^n$ there holds
\begin{equation}\label{eq1}
\widebar{u}(x)\varphi(x)\int\limits_{B(0,1)} \widebar{w}(x+ry) =\varphi(x) \int\limits_{B(0,1)} \widebar{u}(x+ry)\widebar{w}(x+ry).
\end{equation}
For the sake of simplicity below we still use symbols $u$ and $w$ to denote $\widebar{u}$ and $\widebar{w}$, respectively. We integrate both sides of \eqref{eq1} with respect to $x\in \Real^n$ to obtain
\begin{equation}\label{eq2}
\int\limits_{\Real^n} u(x)\varphi(x) \bigg( \, \int\limits_{B(0,1)} w(x+ry) dy \bigg) dx= \int\limits_{\Real^n} \varphi(x) \bigg( \,\int\limits_{B(0,1)} u(x+ry)w(x+ry) dy \bigg) dx.
\end{equation}
Observe, that the Fourier transform of functions $\varphi u$, $\int_{B(0,1)} w(x+ry) dy$ and $\int_{B(0,1)} u(x+ry) w(x+ry) dy$ exist and the latter two are $L^2(\Real^n)$ integrable. Therefore, we apply the Parseval identity in \eqref{eq2} and obtain
\begin{equation}\label{eq3}
\int\limits_{\Real^n} \overline{\Fs ( \varphi u ) (\xi)} \Fs \bigg( \, \int\limits_{B(0,1)} w(\cdot+ry) dy \bigg) (\xi) d\xi = \int\limits_{\Real^n} \overline{\Fs(\varphi)(\xi)} \Fs \bigg( \, \int\limits_{B(0,1)} u(\cdot+ry)w(\cdot +ry) dy \bigg) (\xi) d\xi.
\end{equation}
Here $\Fs(f)(\xi):= \int_{\Real^n} e^{-i\xi y} f(y) dy$ stands for the Fourier transform of $f$ at $\xi \in \Real^n$. The following formula holds for any $f \in L^1_{loc} (\Omega)$:
\[ \int\limits_{\Real^n} e^{- i x \xi} \bigg( \, \int\limits_{B(0,1)} f(x+y)dy \bigg) dx = \int\limits_{B(0,1)} \Fs (f(\cdot+y) )(\xi) dy =\Fs(f)(\xi) \int\limits_{B(0,1)} e^{ i y \xi }  dy. \]
We apply this formula twice: for $f=w$ and $f=uw$ and employ respectively to the left- and the right-hand side in \eqref{eq3} to arrive at the following identity:
\begin{equation}\label{eq4}
\int\limits_{\Real^n} \overline{\Fs(\varphi u)(\xi)} \Fs(w)(\xi) \bigg( \, \int\limits_{B(0,1)}  e^{i ry \xi } dy \bigg) d\xi = \int\limits_{\Real^n} \overline{\Fs(\varphi)(\xi)} \Fs (uw)(\xi) \bigg(\, \int\limits_{B(0,1)} e^{ i ry\xi}dy \bigg) d\xi.
\end{equation}
Let us observe that both sides of \eqref{eq4} are smooth functions when considered with respect to $r$ and this allows us to calculate the appropriate derivatives by differentiating under the integral sign. Namely, we differentiate \eqref{eq4} with respect to $r$ by $j \leq 2l$ times
\[ \int\limits_{\Real^n} \overline{\Fs(\varphi u)(\xi)} \Fs(w)(\xi) \bigg( \, \int\limits_{B(0,1)}  ( i \xi y)^j e^{ i r y \xi } dy \bigg) d\xi = \int\limits_{\Real^n} \overline{\Fs(\varphi)(\xi)} \Fs (uw)(\xi) \bigg( \, \int\limits_{B(0,1)} ( i \xi y)^j e^{ i r y \xi } dy \bigg) d\xi. 
\]
For $r=0$ this identity reads
\[
\int\limits_{\Real^n} i^j \overline{\Fs(\varphi u)(\xi)} \Fs(w)(\xi) \bigg(\, \int\limits_{B(0,1)}  (\xi y)^j  dy \bigg) d\xi = \int\limits_{\Real^n} i^j \overline{\Fs(\varphi)(\xi)} \Fs (uw)(\xi) \bigg( \, \int\limits_{B(0,1)} (\xi y)^j  dy \bigg) d\xi. 
\]
Note that 
\[ \int\limits_{B(0,1)} (\xi y)^j dy = \int\limits_{B(0,1)} (\xi_1 y_1 + \ldots + \xi_n y_n )^j dy= \int\limits_{B(0,1)} \sum_{|\alpha|=j} \binom{\abs{\alpha}}{\alpha} \xi^\alpha y^\alpha dy = \sum_{\abs{\alpha}=j} A_\alpha \xi^\alpha . \]
Using the above observations equation \eqref{eq4} transforms to
\begin{equation}\label{eq5}
\int\limits_{\Real^n} \sum_{|\alpha|=j} A_\alpha (i\xi)^\alpha \overline{\Fs(\varphi u)(\xi)} \Fs(w)(\xi)  d\xi = \int\limits_{\Real^n}\sum_{|\alpha|=j} A_\alpha (i\xi)^\alpha \overline{\Fs(\varphi)(\xi)} \Fs (uw)(\xi)  d\xi. 
\end{equation}
Apply the Parseval identity in \eqref{eq5} and move the expression on the left-hand side to the right-hand side in order to recover the following equation
\begin{equation*}
\int\limits_{\Real^n} \sum_{|\alpha|=j} A_\alpha \varphi(x) \Big( D^\alpha u(x) w(x)  -   u(x) D^\alpha w(x) \Big)dx =0,
\end{equation*}
which is a weak formulation of the following equation
\[ \sum_{|\alpha|=j} A_\alpha \Big( D^\alpha (uw) - u D^\alpha w \Big) =0.\]
The proof of Theorem \ref{twsys} is, therefore, completed.
\end{proof}

\section{Applications of Theorem \ref{twsys} }\label{ap1}

In this section we illustrate Theorem \ref{twsys} by determining the space $\Hs(\Omega,d,dx)$ in case of the distance function~$d$ being induced by the $l^p$ norm and a constant weight $w=1$. Our goal is to show that whenever $p\ne 2$ and $n=2$, the space $\Hs(\Omega,l^p,dx)$ consists of at most 8 linearly independent harmonic polynomials. We already know that $\Hs(\Omega,l^2,dx)$ consists of all harmonic functions in $\Omega$, which differs significantly from the previous case. Moreover, we describe system \eqref{system3} for $p=2$ and smooth $w$ and compare with the equations from Theorem \ref{twbo}. Our computations are new both for $\Hs(\Omega,l^p,dx)$ with $p\ne 2$ and for $p=2$ and a smooth weight.

Let us consider the space $\Real^n$ with the distance $l^p$ for $1\leq p < \infty$ and a smooth weight $w$. First, we calculate coefficients $A_\alpha$ for $\alpha$. Due to Remark \ref{rem1} we only need to consider multi-indices $\alpha$ with even components. Using the Dirichlet Theorem (see \cite{D1}, p. 157) we obtain
\begin{equation}\label{ani}
A_\alpha =2^n \binom{\abs{\alpha}}{\alpha} \int\limits_{\substack{\sum x_i^p < 1, x_i \geq 0}} x_1^{\alpha_1} \cdot \ldots \cdot x_n^{\alpha_n} dx = \left( \frac{2}{p} \right)^n \binom{\abs{\alpha}}{\alpha} \frac{\prod\limits_{i=1}^n \Gamma\left( \frac{\alpha_i +1}{p} \right)}{\Gamma\left( \frac{|\alpha|+n+p}{p} \right) },
\end{equation}
where $\Gamma$ stands for the gamma function. Notice, that coefficients $A_\alpha$ for $j=2$ are constant by symmetry of balls in the $l^p$ norm. Therefore, the equation of system \eqref{system3} for $j=2$ translates to
\[ \sum_{i=1}^n \frac{\partial^2}{\partial x_i^2} (uw) - u \frac{\partial^2}{\partial x_i^2}(w) = 0,\]
or equivalently to 
\begin{equation}\label{eq11}
 w \Delta u +2 \nabla u \nabla w =0.
\end{equation}
Let us recall that since \eqref{eq11} is an elliptic equation with bounded coefficients, then every weak solution is smooth and solves \eqref{eq11} in a classical way. 
 Therefore $\Hs(\Omega,l^p,w) \subset C^\infty(\Omega)$ and the system \eqref{system3} can be understood in the classical sense. In order to describe further equations we need to divide our calculations into more specific instances: $p=2$, $p = \infty$ and remaining values of $1 \leq p < \infty$.

\subsection{The case of weighted $l^2$ distance}\label{ldwa}

We show that for $p=2$ system \eqref{system3} is equivalent to the~one~in~\eqref{system2}, see Theorem \ref{twbo}. We begin with computing the coefficients $A_\alpha$ in \eqref{ani}, which  take the following form \mbox{(including the case $j=2$ discussed in the beginning of Section \ref{ap1})}
\begin{equation}\label{eq9}
A_\alpha =\binom{\abs{\alpha}}{\alpha} \frac{\prod_{i=1}^n \Gamma\left( \frac{\alpha_i +1}{2} \right)}{\Gamma\left( \frac{|\alpha|+n+2}{2} \right) }.
\end{equation}
Recall the two formulas concerning the Gamma function. For any $k \in \Natural$ there holds
\[ \Gamma \left( \frac{k}{2} \right) = \sqrt{\pi}\frac{(k-2)!!}{2^{\frac{k-1}{2}}} \quad \text{and} \quad \Gamma \left( k+\frac{1}{2}\right)= \sqrt{\pi} \frac{(2k)!}{4^k k!} .\]
We use the above in \eqref{eq9} to obtain that
\[ A_\alpha =\binom{\abs{\alpha}}{\alpha} \frac{2^{\frac{j+n+1}{2}}}{\sqrt{\pi} (j+n)!!} \prod\limits_{i=1}^n \left( \sqrt{\pi} \frac{\alpha_i!}{2^{\alpha_i} ( \frac{\alpha_i}{2})!} \right) = \binom{\abs{\alpha}}{\alpha} \frac{\pi^{ \frac{n-1}{2}}  2^{\frac{n+1}{2}}}{(j+n)!!} \prod\limits_{i=1}^n \frac{\alpha_i!}{(\frac{\alpha_i}{2})!}. \]
Therefore the $j$-th equation of system \eqref{system3} can be written in the following form
\begin{align}
0&= \sum_{|\alpha|=j, \alpha_i \in 2\Natural } A_\alpha \left( D^\alpha (uw) -uD^\alpha w \right) \nonumber \\
& = \sum_{|\alpha|=j, \alpha_i \in 2\Natural}  \frac{j!}{\alpha_1! \ldots \alpha_n!} \frac{\pi^{\frac{n-1}{2}}  2^{\frac{n+1}{2}}}{(j+n)!!} \prod\limits_{i=1}^n \frac{\alpha_i!}{(\frac{\alpha_i }{2})!} (D^\alpha (uw)-u D^\alpha w) \label{eq10} \\
&= \frac{j! \pi^{\frac{n-1}{2}} 2^{\frac{n+1}{2}}}{(\frac{j}{2})! (j+n)!! } \sum_{|\beta|=j/2} 
\binom{\frac{j}{2}}{\beta} (D^{2\beta} (uw) - u D^{2\beta } w). \nonumber
\end{align}

Next, observe that for any $f \in C^{2l}(\Omega)$ its $l$-th Laplace operator can be written in the following form
\begin{equation}\label{eq12}
 \Delta^l f = \left( \frac{\partial^2}{\partial x_1^2} + \ldots +  \frac{\partial^2}{\partial x_n^2} \right)^l f =\sum_{|\beta|=l} \frac{l!}{\beta_1!\ldots \beta_n!} D_{2\beta} f,
\end{equation}
where the multinomial formula has been applied. Finally by \eqref{eq10} and \eqref{eq12} we conclude that in the $l^2$-case system \eqref{system3} is equivalent to
\begin{equation}\label{system4}
\Delta^l (uw) = u\Delta^l w, \qquad \text{for }l=1,2,\ldots
\end{equation}
In fact \eqref{system4} is equivalent to \eqref{system2}. To that end observe that $\Delta(uw) = w \Delta u+2\nabla u \nabla w + u \Delta w$. Upon joining this with the equation of \eqref{system4} corresponding to $j=2$ we obtain the first equation of \eqref{system2}. Further equations of \eqref{system2} follow from \eqref{system4} and the following computation:
\[ u \Delta^{l+1}w =\Delta(\Delta^l(uw)) = \Delta(u \Delta^l w )= \Delta u \Delta^l w + 2 \nabla u \nabla (\Delta^l w )+ u \Delta^{l+1} w. \]
Therefore 
\[ \Delta u \Delta ^l w +  2 \nabla u \nabla ( \Delta^l w) =0 \qquad \text{for }l=0,1,2,\ldots\] 
and we end this part of discussion by concluding, that by above considerations our Theorem \ref{twsys} is a generalization of Bose's result, see Theorem \ref{twbo}.

\subsection{The case of $l^p$ distance for $p \not \in \{ 2,\infty \}$}

Strongly harmonic functions on $\Omega \subset \Real^n$ equipped with the $l^p$-distance and the Lebesgue measure behave quite differently in case of $p \not \in \{ 2,\infty \}$ than for $p=2$. In what follows we demonstrate that only finitely many equations of system \eqref{system3} are nontrivial, and that in fact all of functions in $\Hs(\Omega, l^p , dx)$ are harmonic polynomials. For the sake of simplicity we consider case $n=2$, and $u$~depending on two variables $x:=x_1$ and $y:=x_2$.

We now focus our attention on equations of system \eqref{system3} for $j>2$ since the equation for $j=2$ is described in \eqref{eq11}. We examine the differential operator $R_j:=\sum_{|\alpha|=4} A_\alpha D^\alpha$. We already showed that for $p=2$ operator $R_2$ is equal to $\Delta$ up to a multiplicative constant. Recall formula \eqref{ani}:
\[ A_\alpha = \binom{\abs{\alpha}}{\alpha} \left( \frac{2}{p} \right)^2 \frac{\prod\limits_{i=1}^2 \Gamma\left( \frac{\alpha_i +1}{p} \right)}{\Gamma\left( \frac{|\alpha|+2+p}{p} \right) }.\]
We restrict our attention to the part of $A_\alpha$ varying with respect to $\alpha$, i.e. $ \prod_{i=1}^2 \Gamma \left((\alpha_i +1)/p \right)$. Let us observe, that for $|\alpha|=4$ those coefficients attain only two different values:
\begin{enumerate}[{(1)}]
\item $ \Gamma\left(\frac{5}{p} \right)\Gamma \left(\frac{1}{p}\right)$, whenever $\alpha=(4,0)$ or $\alpha = (0,4)$. This coefficient stands by $\frac{\partial^4}{\partial x^4}$ and $\frac{\partial^4}{\partial y^4}$ in $R_4$,
\item  $6\Gamma\left(\frac{3}{p}\right)^2$, if $\alpha=(2,2)$. This coefficient appears by $\frac{\partial^4}{\partial x^2 \partial y^2}$ in operator $R_4$.
\end{enumerate}
Therefore $R_4$ takes a form
\[ \left( \frac{2}{p}\right)^2 \Gamma \left( \frac{|\alpha|+4}{p}\right)^{-1} \left[\Gamma\left(\frac{5}{p} \right)\Gamma \left(\frac{1}{p}\right)\left( \frac{\partial^4}{\partial x^4} + \frac{\partial^4}{\partial y^4}\right) +6\Gamma\left(\frac{3}{p}\right)^2 \left( \frac{\partial^4}{\partial x^2 \partial y^2}\right) \right],\] 
which, up to a multiplicative constant, reduces to $\Delta^2= \frac{\partial^4}{\partial x^4} + \frac{\partial^4}{\partial y^4} + 2 \frac{\partial^4}{\partial x^2 \partial y^2}$ if and only if 
\[f(p):= \frac{\Gamma\left(\frac{3}{p}\right)^2}{\Gamma\left(\frac{5}{p} \right)\Gamma \left(\frac{1}{p}\right)} = \frac{1}{3}.\]
By the previous considerations this holds true for $p=2$. Let us differentiate $f$ with respect to $p$. Recall, that the formula for derivative of the gamma function stays:
\[\Gamma'(z) = \Gamma(z) \left(-\frac{1}{z} -\gamma -\sum_{k=1}^\infty \frac{1}{k+z}-\frac{1}{k} \right) = \Gamma(z) \Psi(z), \]
where $\gamma$ is the Euler constant and $\Psi$ is the digamma function defined by the above equality. We use this identity to compute the following
\begin{align*}
f'(p) &= \frac{2\Gamma\left( \frac{3}{p} \right)^2 \Psi\left( \frac{3}{p} \right)\left(-\frac{3}{p^2}\right)\Gamma\left( \frac{5}{p} \right) \Gamma\left( \frac{1}{p} \right)}{\Gamma\left( \frac{5}{p} \right)^2 \Gamma\left( \frac{1}{p} \right)^2}\\
&-\frac{\Gamma\left( \frac{3}{p} \right)^2\left[ \Gamma\left( \frac{5}{p} \right)\Psi\left( \frac{5}{p} \right)\left(-\frac{5}{p^2}\right)\Gamma\left( \frac{1}{p} \right) + \Gamma\left( \frac{5}{p} \right)\Gamma\left( \frac{1}{p} \right)\Psi\left( \frac{1}{p} \right)\left(-\frac{1}{p^2}\right) \right]}{\Gamma\left( \frac{5}{p} \right)^2\Gamma\left( \frac{1}{p} \right)^2} \\
&= f(p) \left( -\frac{6}{p^2} \Psi\left(\frac{3}{p}\right) + \frac{5}{p^2} \Psi\left( \frac{5}{p} \right) +\frac{1}{p^2} \Psi \left(\frac{1}{p} \right)\right).
\end{align*}
Since $f$ is positive for $p \in [1,\infty)$, we only need to investigate the sign of the second factor in the above formula:
\begin{align*}
&-\frac{6}{p^2} \Psi\left(\frac{3}{p}\right) + \frac{5}{p^2} \Psi\left( \frac{5}{p} \right) +\frac{1}{p^2} \Psi\left(\frac{1}{p}\right) \\
&= - \frac{6}{p^2}\left( -\frac{p}{3}-\gamma - \sum_{k=1}^\infty \frac{1}{k+3/p} -\frac{1}{k}\right)+ \frac{5}{p^2} \left( -\frac{5}{p}-\gamma - \sum_{k=1}^\infty \frac{1}{k+5/p}-\frac{1}{k}\right)\\
&+ \frac{1}{p^2} \left( -p-\gamma -\sum_{k=1}^\infty \frac{1}{k+1/p}-\frac{1}{k}\right) = -\frac{1}{p^2} \sum_{k=1}^\infty \frac{-6}{k+3/p}+\frac{5}{k+5/p}+\frac{1}{k+1/p}\\ &= \frac{1}{p^2}\sum_{k=1}^\infty \frac{8k}{p(k+3/p)(k+5/p)(k+1/p)} >0.
\end{align*}
Therefore $f$ is monotonically increasing on $[1,\infty)$ and attains value $1/3$ exactly at $p=2$. We conclude our computations with the following:
\[ R_4 = \begin{cases} 
\Gamma\left( \frac{5}{2}\right)\Gamma\left( \frac{1}{2}\right) \Delta^2 & \text{for } p=2, \\ 
 \Gamma\left(\frac{5}{p} \right)\Gamma \left(\frac{1}{p}\right) \left( \Delta^2 +  \left( \frac{6\Gamma\left(\frac{3}{p}\right)^2}{\Gamma\left(\frac{5}{p} \right)\Gamma \left(\frac{1}{p}\right)} -2 \right) \frac{\partial^4}{\partial x^2 \partial y^2} \right) & \text{for } p\ne 2.
\end{cases}
\]
We are now in a position to apply Theorem \ref{twsys}. Let $u \in \Hs(\Omega,l^p,dx)$. Then it satisfies the system of equations \eqref{system3}. By observation \eqref{eq11} equation of system \eqref{system3} for $j=2$ reduces to 
\begin{equation}\label{lap}
\Delta u = 0,
\end{equation}  hence $u$ is harmonic, and its bilaplacian vanishes. Moreover $u$ has to satisfy equation of system \eqref{system3} for $j=4$,~i.e. $ R_4 u =0$. Since bilaplacian of $u$ vanishes, therefore $u$ is in fact solution to $u_{x x y y}=0$. Let us observe, that differentiating twice $\Delta u$ with respect to $x$ and $y$ respectively we obtain
\[ u_{x x x x} + u_{x x y y} = 0 \quad \text{and} \quad u_{x x y y} + u_{y y y y} = 0. \]
Therefore both $u_{x x x x}=0$ and $u_{y y y y}=0$, which means that for each fixed value of $y$ function $u(x,y)$ is a polynomial in $x$ of degree at most 3 and analogously for a fixed $x$ function $u(x,y)$ is a polynomial in $y$ of degree at most 3. Then there exist $a_i(y)$ and $b_i(x)$ for $i=0,1,2,3$ such that
\begin{equation}\label{e0}
u(x,y) = a_0(y) +a_1(y) x + a_2(y) x^2 +a_3(y) x^3 = b_0(x) + b_1(x) y+b_2(x) y^2 + b_3(x) y^3.
\end{equation}
In what follows we omit writing the arguments of $a_i$ and $b_i$. Simple calculations give us that
\begin{equation}\label{e1}
u_{x x x x}=b_0^{(4)} +b_1^{(4)} y + b_2^{(4)} y^2 + b_3^{(4)} y^3 =0,
\end{equation} 
and
\begin{equation}\label{e2}
u_{y y y y}=a_0^{(4)} + a_1^{(4)} x +a_2^{(4)} x^2 +a_3^{(4)} x^3 =0.
\end{equation} 
Now at each fixed $x$ in \eqref{e1} the polynomial in $y$ has to have all coefficients equal to $0$ due to the Equality of Polynomials Theorem, hence $b_i^{(4)} = 0$ for $i=1,2,3,4$. Similarly, at \eqref{e2} we set that $a_i^{(4)}=0$ for all $i=1,2,3,4$. Therefore, all of $a_i$ and $b_i$ are polynomials of degree at most $3$. Moreover, we know that $u_{x x y y}=0$. We calculate this derivative in \eqref{e0} to get
\[ 0=u_{x x y y} =2a_2'' + 6 x a_3'' = 2b_2'' +6yb_3 ''.\]
Thus, once again we obtain that $a_i''=0$ and $b_i''=0$ for $i=2,3$, so $a_2$, $a_3$, $b_2$ and $b_3$ are in fact of degree at most $1$. By the above considerations we conclude that $u$ is a linear combination of the monomials
\begin{equation}\label{e3}
 1,x,y,xy,x^2,x^3,xy^2,xy^3,x^2y,x^3y,y^2,y^3, 
\end{equation}
which solves equation \eqref{lap}. Therefore, $u$ has to be a harmonic polynomial of the form described by \eqref{e3}. The part of $u$ generated by $\{ 1,x,y,xy\}$ is already harmonic and for that reason we only need to consider $u$ being a combination of the remaining monomials in \eqref{e3}, i.e.
\[ u=c_1 x^2 + c_2 x^3 + c_3 xy^2 +c_4 xy^3 + c_5 x^2y+c_6 x^3y + c_7 y^2 + c_8 y^3.\]
Inserting $u$ to \eqref{lap} we get the following
\[ 0=2 (c_1 + c_7) + 2x(3c_2 + c_3) + 6xy(c_4+c_6) + 2y(c_5+3 c_8), \]
and once again using the Equality of Polynomials Theorem we obtain that
\begin{equation}\label{e33}
 u \in \text{span} \left\{1,x,y,xy, x^2-y^2, xy^2-\frac{x^3}{3}, xy^3-x^3y,x^2y-\frac{y^3}{3} \right\}.
\end{equation}
Finally, let us observe that in equations of system \eqref{system3} for $j=6$ there appear only the following operators
\[ \frac{\partial^6}{\partial x^6}, \frac{\partial^6}{\partial x^4 \partial y^2}, \frac{\partial^6}{\partial x^2 \partial y^4},\frac{\partial^6}{ \partial y^6}, \]
which all vanish on $u$ as in \eqref{e33}. The triviality of equations for $j>6$ follows immediately. Therefore, we summarize our discussion with the following inclusion:
\begin{equation}\label{lp}
\Hs(\Omega,l^p,dx) \subset \text{span} \left\{1,x,y,xy, x^2-y^2, xy^2-\frac{x^3}{3}, xy^3-x^3y,x^2y-\frac{y^3}{3} \right\}. 
\end{equation}
We postpone the proof of the opposite inclusion till Section \ref{51}. Similar observation can be made in higher dimensions.

\subsection{The case of $l^p$ distance for $p =\infty$}

In order to complete our illustration of Theorem \ref{twsys} we need to consider the remaining case, i.e. characterize functions $u$ in $\Hs(\Omega,l^p,dx)$ for $p=\infty$ by using Theorem \ref{twsys}. In this case $B(0,1)=[-1,1]^n$ in $l^\infty$ norm. Therefore, we obtain the following formula for the coefficients $A_\alpha$ in \eqref{system3}:
\[ A_\alpha =\binom{\abs{\alpha}}{\alpha} \int_{-1}^1 x_1^{\alpha_1} \cdot \ldots \int_{-1}^1 x_n^{\alpha_n} = \binom{\abs{\alpha}}{\alpha}\frac{2^n}{\prod_{i=1}^n (\alpha_i +1)}.\]
Then, after inserting $A_\alpha$ and dividing by the $2^n $ factor, system \eqref{system3} converts to the following
\begin{equation}
\sum_{|\alpha|=j,2|\alpha_i} \binom{\abs{\alpha}}{\alpha} \frac{1}{(\alpha_1+1)! \cdot \ldots \cdot (\alpha_n+1) !}  D^\alpha u =0.
\end{equation}
As in the previous subsection we restrict our attention to case $n=2$ and write out the equation for $j=2$: $\frac{1}{6} (u_{x x} + u_{y y})=0$. Hence $u$ is a harmonic function. Equation for $j=4$ is the following
\[\frac{1}{120} (u_{xxxx}+u_{yyyy}) + \frac{1}{6} u_{xxyy} =0, \]
and can be reduced to $\Delta^2 u +  20 u_{xxyy}=0$. This, combined with an analogous discussion to the one ending the previous subsection leads us to the conclusion that inclusion \eqref{lp} holds true also for $p=\infty$.

\section{The converse of Theorem \ref{twsys}}

Since both Theorems \ref{F-L}, \ref{twbo} and Proposition \ref{bosod} give not only the necessary, but also the sufficient condition for the mean value properties in the sense of Flatto and Bose, respectively, our next goal is to find an appropriate counterpart of these results. In case of nonconstant weights Proposition \ref{bosod} imposes an additional PDE condition on $w$, hence we expect an analogous condition. From the point of view of our further considerations, the following \emph{generalized Pizzetti formula} introduced by Zalcman in \cite{F}, will be vital.

\begin{theorem}[Theorem 1, \cite{F}]\label{twza}
Let $\mu$ be a finite Borel measure on $\Real^n$ with compact support and \sloppy ${\Fs(\xi)=\int_{\Real^n}e^{-i(\xi y)}d\mu(y)}$ be the Fourier transform of the measure $\mu$. Suppose that $f$ is an analytic function on a domain $\Omega \subset \Real^n$. Then the following equality holds
\begin{equation}\label{gpiz}
 \int_{\Real^n} u(x+ry) d\mu(y) = [\Fs(-rD)f](x),
\end{equation}
for all $x \in \Omega$ and $r>0$ such that the left-hand side exists and the right-hand side converges. The symbol~$D$ is given by $D:=-i \left( \frac{\partial}{\partial x_1},\ldots,\frac{\partial}{\partial x_n} \right)$.
\end{theorem}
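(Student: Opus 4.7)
My plan is to expand both sides of \eqref{gpiz} as the same multi-index power series in $\partial^\alpha f(x)$, one coming from the Taylor expansion at the origin of the entire function $\Fs$, the other from the Taylor expansion of $f$ at $x$. Since $\mu$ has compact support $K \subset B(0,R)$, all moments $m_\alpha := \int_{\Real^n} y^\alpha\,d\mu(y)$ are finite and, by uniform convergence of the exponential series on $K$, I can write
\begin{equation*}
\Fs(\xi)=\int_{\Real^n}e^{-i\xi\cdot y}\,d\mu(y)=\sum_\alpha \frac{(-i\xi)^\alpha}{\alpha!}\,m_\alpha,
\end{equation*}
which shows in passing that $\Fs$ extends to an entire function on $\mathbb{C}^n$.

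Substituting $\xi=-rD$ with $D=-i\nabla$ yields $(-i\xi)^\alpha=(irD)^\alpha=(r\nabla)^\alpha=r^{|\alpha|}\partial^\alpha$, so the right-hand side of \eqref{gpiz} becomes formally
\begin{equation*}
[\Fs(-rD)f](x)=\sum_\alpha \frac{r^{|\alpha|}\,m_\alpha}{\alpha!}\,\partial^\alpha f(x).
\end{equation*}
On the left-hand side, analyticity of $f$ supplies constants $C,\rho>0$ with Cauchy-type estimates $|\partial^\alpha f(x)|\leq C\alpha!/\rho^{|\alpha|}$, so the Taylor expansion $f(x+ry)=\sum_\alpha \tfrac{r^{|\alpha|}y^\alpha}{\alpha!}\partial^\alpha f(x)$ converges uniformly for $y\in K$ whenever $rR<\rho$. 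Inserting this series into $\int_{\Real^n} f(x+ry)\,d\mu(y)$ and exchanging summation with integration, which is legitimate by dominated convergence against the geometric majorant $C\sum_\alpha (rR/\rho)^{|\alpha|}<\infty$, produces exactly the same series.

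The main technical obstacle is precisely these two termwise manipulations together with the absolute convergence of the operator series; both follow cleanly from compactness of $K$ combined with Cauchy bounds for analytic $f$, so no deeper input will be needed beyond the standing assumption that each side of \eqref{gpiz} is well defined. The identification of the two sides with the common power series above then yields \eqref{gpiz}.
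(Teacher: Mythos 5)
First, a point of comparison: the paper contains no proof of this statement at all --- Theorem \ref{twza} is quoted from Zalcman \cite{F} as an external tool. So your proposal can only be measured against the cited source and against how the paper uses the formula. Your route is the standard one (and essentially Zalcman's own computation): expand $e^{-i\xi\cdot y}$ to get $\Fs(\xi)=\sum_\alpha \frac{(-i\xi)^\alpha}{\alpha!}m_\alpha$, read $\Fs(-rD)$ through this series so that $(-i\xi)^\alpha \mapsto r^{|\alpha|}\partial^\alpha$, expand $f(x+ry)$ in its Taylor series at $x$, and integrate term by term. Each of these steps is carried out correctly, and your majorant $C\sum_\alpha (rR/\rho)^{|\alpha|}=C(1-rR/\rho)^{-n}$ does legitimize the interchange of sum and integral.

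The genuine gap is the range of validity. What you prove is \eqref{gpiz} for $0<r<\rho/R$, with $\rho$ the local, $x$-dependent radius in your Cauchy estimates; the theorem asserts equality for \emph{every} $r>0$ for which the left side exists and the right side converges. For larger $r$ both of your key steps fail: mere convergence of the operator series provides no integrable majorant for dominated convergence, and --- worse --- the Taylor series of $f$ at $x$ need not converge to $f(x+ry)$ once $r\abs{y}$ exceeds the radius of convergence, so the ``common power series'' identification is simply unavailable. The standard repair is a continuation argument in $r$: grouping terms with $\abs{\alpha}=j$, the right-hand side is a one-variable power series $\sum_j c_j r^j$; convergence at $r_0$ gives analyticity on $[0,r_0)$ and Abel continuity at $r_0$, while the left-hand side is analytic in $r$ on the interval where $x+r\,\mathrm{supp}\,\mu\subset\Omega$, so your small-$r$ identity propagates by the identity theorem. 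This is not cosmetic for the present paper: the proof of Theorem \ref{twod} needs \eqref{equ8} for \emph{all} admissible radii, not just small ones. Finally, note that even the quoted statement needs care in its literal reading: if $x+r\,\mathrm{supp}\,\mu$ is not joined to $x$ inside $\Omega$, the series on the right can converge to a ``wrong branch'' value while the left side exists, so some connectivity restriction (or the small-$r$ interpretation you actually prove, extended as above) is implicit. You should state explicitly which $r$ your argument covers and add the continuation step.
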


%

\begin{remark}
Formula \eqref{gpiz} is one of the most important tools used in the proof of our Theorem \ref{twod} and requires the analyticity of functions. Thus we need to assume analyticity of weight $w$. Weakening this assumption would require developing a completely new approach.
\end{remark}

\begin{remark}
Theorem \ref{twod} generalizes the converse part of Theorem \ref{F-L} and Proposition \ref{bosod}. Theorem \ref{F-L} follows from our Theorem \ref{twod} with $w=1$ and the regularity of solutions to the Laplace equation \eqref{lap}. Proposition~\ref{bosod} is generalized by Theorem \ref{twod} due to the following lemma.
\begin{lemma}
Suppose that $w \in C^{2l}(\Omega)$ solves the following equation
\begin{equation}\label{gef} 
\Delta^l w + a_{l-1} \Delta^{l-1} w + \ldots + a_1 \Delta w + a_0 w = 0,
\end{equation}
where all $a_i \in \mathbb{C}$. Then $w$ is analytic in $\Omega$.
\end{lemma}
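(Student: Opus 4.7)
The plan is to recognize the operator
\[
L := \Delta^l + a_{l-1}\Delta^{l-1} + \ldots + a_1 \Delta + a_0
\]
as a constant-coefficient linear partial differential operator of order $2l$, check that it is elliptic, and then invoke the classical analytic hypoellipticity of constant-coefficient elliptic operators.

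First I would compute the principal symbol of $L$. Since the total symbol of $\Delta$ is $-|\xi|^2$, the symbol of $L$ is
\[
\sigma_L(\xi) = (-1)^l |\xi|^{2l} + a_{l-1}(-1)^{l-1}|\xi|^{2(l-1)} + \ldots + a_1(-|\xi|^2) + a_0,
\]
whose principal (top-order) part equals $(-1)^l |\xi|^{2l}$. This is nonzero for every $\xi \in \Real^n \setminus \{0\}$, independently of the values of the complex constants $a_i$, so $L$ is a constant-coefficient elliptic operator of order $2l$ on $\Real^n$.

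Second, I would invoke the Petrovsky/H\"ormander theorem on analytic hypoellipticity of constant-coefficient elliptic operators (see e.g. H\"ormander, \emph{The Analysis of Linear Partial Differential Operators I}, Thm.~7.5.1, or F.~John, \emph{Plane Waves and Spherical Means}, Ch.~3): if $P$ is a constant-coefficient elliptic differential operator on an open set $\Omega\subset\Real^n$ and $Pu=0$ in the sense of distributions on $\Omega$, then $u$ is real-analytic on $\Omega$. The hypothesis $w \in C^{2l}(\Omega)$ is in fact stronger than required; in particular $w$ is a classical, hence distributional, solution of $Lw=0$ on $\Omega$, so the theorem applies and $w$ is analytic.

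There is essentially no obstacle beyond naming the correct black-box theorem. The only mild subtlety is that the coefficients $a_i$ are allowed to be complex, but the hypoellipticity result holds for operators with arbitrary complex constant coefficients, and ellipticity (which concerns only the principal symbol $(-1)^l|\xi|^{2l}$) is not affected by the lower-order terms. An alternative, more elementary route would factor $P(t) = \prod_{i=1}^l (t - \lambda_i)$ over $\mathbb{C}$ and iterate analyticity along the filtration of kernels of the Helmholtz-type operators $\Delta - \lambda_i$, each of which is elliptic; but invoking the general constant-coefficient result is cleaner and avoids reconstructing $w$ from chains of auxiliary solutions.
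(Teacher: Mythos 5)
Your proposal is correct, but it takes a genuinely different route from the paper. You treat $L=\Delta^l+a_{l-1}\Delta^{l-1}+\cdots+a_1\Delta+a_0$ as a single constant-coefficient operator of order $2l$, note that its principal symbol $(-1)^l|\xi|^{2l}$ never vanishes for $\xi\ne 0$ regardless of the complex lower-order coefficients, and invoke Petrowsky-type analytic hypoellipticity of elliptic constant-coefficient operators; once that black box is granted, the lemma is immediate, and your hypothesis check ($C^{2l}$ classical solutions are in particular distributional solutions) is fine. The paper instead argues by induction on $l$, using only a second-order black box: the classical theorem (Evans, Thm.~2, p.~229) that a solution of $Lw+\lambda w=\varphi$ with $L$ elliptic and $\varphi$ analytic is analytic. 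In the inductive step it rewrites \eqref{gef} in Horner form as an order-$(l-1)$ polynomial in $\Delta$ applied to $v:=\Delta w+\lambda w$, choosing $\lambda$ so that the coefficient standing by $w$ alone vanishes (i.e.\ $\lambda$ comes from a root of the characteristic polynomial); the induction hypothesis makes $v$ analytic, and the second-order theorem applied to $\Delta w+\lambda w=v$ then upgrades $w$ itself. This is essentially the ``more elementary route'' you sketch in your closing sentence, with two differences worth noting: the paper peels off one root per induction step rather than writing the full factorization $\prod_i(t-\lambda_i)$ at once, and the ``reconstruction of $w$ from chains of auxiliary solutions'' you wanted to avoid never arises, because the \emph{inhomogeneous} second-order analyticity theorem transfers analyticity directly from $\Delta w+\lambda w$ down to $w$. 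What each approach buys: yours is shorter and generalizes instantly to any elliptic constant-coefficient operator; the paper's stays within the toolbox of a standard PDE textbook and avoids the general Petrowsky theory. One minor caveat on your side: double-check the exact reference (the result you need --- every distributional solution of $P(D)u=f$ with $P$ elliptic and $f$ analytic is analytic --- is classical, but the theorem number in H\"ormander may not be 7.5.1); this is a citation detail, not a gap.
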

\begin{proof}
We prove the lemma by the mathematical induction with respect to $l$. Recall the following fact (see Theorem 2 on p. 229 in \cite{E2}): Suppose that $w \in C^2(\Omega)$ solves the following equation
\begin{equation}\label{daur}
L w + \lambda w = \varphi, 
\end{equation}
where $L$ is elliptic and $\varphi$ is analytic in $\Omega$, $\lambda \in \mathbb{C}$. Then $w$ is analytic in $\Omega$.

If $l=1$, then we use the above regularity fact with $a_0=\lambda$ and $\varphi \equiv 0$. Now let us assume that the assertion holds for $l-1$ and consider $w$ as in \eqref{gef}. We may rewrite this equation as follows with $\lambda \in \mathbb{C}$:
\begin{align*}
0&=\Delta^{l-1} (\Delta w + \lambda w) +(a_{l-1}-\lambda) \Delta^{l-2} (\Delta w + \lambda w ) + (a_{l-2}-\lambda(a_{l-1}-\lambda)) \Delta^{l-3}(\Delta w + \lambda w) + \ldots \\
&+ (a_1 - \lambda(a_2-\lambda(\ldots) ) ) (\Delta w + \lambda w) + \Big(a_0 -\lambda(a_1 - \lambda(a_2-\lambda(\ldots) ) )\Big) w.
\end{align*}
Choose such $\lambda $, so that the last factor, standing by $w$ in the equation, vanishes. We use the assumption for $l-1$ to obtain that $\Delta w + \lambda w$ is an analytic function, denoted by $\varphi$, i.e. $ \Delta w + \lambda w = \varphi.$ 
This observation together with the regularity observation allow us to conclude the proof.
\end{proof}
\end{remark}

Before proving Theorem \ref{twod} we need to show the following regularity result for strongly harmonic functions.
\begin{lemma}\label{lem4}
Suppose that  $\Omega \subset \Real^n$ is open, $w$ is a positive analytic function and $d$ is induced by norm. Then any $u \in \Hs(\Omega,d,wdx)$ is analytic as well.
\end{lemma}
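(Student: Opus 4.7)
My plan is to exhibit a single elliptic PDE with analytic coefficients which $u$ must satisfy, and then invoke classical analytic hypoellipticity. The natural candidate is the $j=2$ equation of system \eqref{system3} produced by Theorem \ref{twsys}:
\[
\sum_{|\alpha|=2} A_\alpha\bigl(D^\alpha(uw) - u D^\alpha w\bigr) = 0.
\]
Applying the Leibniz rule, for $|\alpha|=2$ one has $D^\alpha(uw) - u D^\alpha w = w\,D^\alpha u + \sum_{|\beta|=1,\,\beta\le\alpha}\binom{\alpha}{\beta} D^\beta u\,D^{\alpha-\beta}w$, so the equation takes the form $Lu = 0$ with
\[
Lu := w\sum_{|\alpha|=2} A_\alpha D^\alpha u + \sum_{|\alpha|=2}\sum_{|\beta|=1,\,\beta\le\alpha} A_\alpha\binom{\alpha}{\beta} D^{\alpha-\beta}w\; D^\beta u.
\]
Since $w$ is analytic, all coefficients of $L$ are analytic on $\Omega$.

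Next, I would verify that $L$ is uniformly elliptic on compact subsets of $\Omega$. The principal symbol is $w(x)\sum_{|\alpha|=2}A_\alpha\xi^\alpha$, and by the definition of $A_\alpha$ together with the multinomial identity $\sum_{|\alpha|=2}\binom{2}{\alpha}x^\alpha\xi^\alpha = (x\cdot\xi)^2$,
\[
\sum_{|\alpha|=2} A_\alpha \xi^\alpha = \int_{B(0,1)} (x\cdot \xi)^2\,dx.
\]
By Lemma \ref{mink}, $B(0,1)$ is an open symmetric bounded convex set containing the origin, hence of positive Lebesgue measure. For $\xi\ne 0$, the integrand is non-negative and vanishes only on the hyperplane $\{x\cdot\xi=0\}$, so the integral is strictly positive. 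Combined with $w(x)>0$, the principal symbol is bounded below by a positive constant on any compactum, confirming uniform ellipticity.

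To apply analytic regularity I need $u$ to be a genuine (at least smooth) solution. Since $w$ is analytic, $w\in W^{l,\infty}_{loc}(\Omega)$ for every $l\in\Natural_+$, and Theorem \ref{reg} gives $u\in W^{l,\infty}_{loc}(\Omega)$ for every $l$; hence $u\in C^\infty(\Omega)$ and satisfies $Lu=0$ classically. Now I invoke the Petrowsky--type theorem on analytic hypoellipticity for linear second-order elliptic operators with analytic coefficients (precisely the result recalled just above in the remark following the statement of Lemma \ref{lem4}, namely Theorem~2 on p.~229 in \cite{E2}, applied with $\varphi\equiv 0$ after absorbing the zeroth-order part into $\lambda$, or just Petrowsky's classical theorem on elliptic systems): every $C^\infty$ solution of $Lu=0$ is real-analytic in $\Omega$. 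This yields the claimed analyticity of $u$.

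The main obstacle is the ellipticity verification: I need to recognize that the quadratic form $\xi\mapsto\sum_{|\alpha|=2}A_\alpha\xi^\alpha$ is precisely the second moment $\int_{B(0,1)}(x\cdot\xi)^2\,dx$ of the unit ball of the norm, and then use geometric information about the unit ball (openness, symmetry, boundedness via Lemma \ref{mink}) to conclude positive definiteness. Once this is in hand, smoothness from Theorem \ref{reg} and the standard analytic-hypoellipticity result close the argument with no further analytical input.
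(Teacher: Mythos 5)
Your proposal is correct and follows essentially the same route as the paper: both reduce to the $j=2$ equation of \eqref{system3} from Theorem \ref{twsys}, identify the principal symbol as $w(x)\int_{B(0,1)}(x\cdot\xi)^2\,dx$, verify strong ellipticity, and conclude by the analytic regularity result \eqref{daur}. The only cosmetic differences are that the paper obtains positive definiteness by inscribing a Euclidean ball and using its symmetry to get the explicit bound $\theta\|\xi\|_2^2$, where you use a null-set argument plus homogeneity, and that you make the smoothness step explicit via Theorem \ref{reg}, which the paper leaves implicit.
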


\begin{proof}
By the assumptions function $u$ is a weak solution to equation for $j=2$ of system \eqref{system3}. Let us show that this equation is strongly elliptic. Let $\xi \in \Real^n$, then
\begin{align*}
\sum_{|\alpha|=2} A_\alpha w \xi^\alpha =w \int_{B(0,1)}  (x \cdot \xi)^2 \geq  w \int_{\|x\|_2 \leq \varepsilon } (x \cdot \xi)^2, 
\end{align*} 
where the last estimate holds with some $\varepsilon>0$ since $d$ is equivalent to the Euclidean distance. Next, observe that
\[\int_{\|x\|_2 \leq \varepsilon } (x \cdot \xi)^2 = \int_{\|x\|_2 \leq \varepsilon} \cos^2 \angle( x,\xi) \|x\|_2^2 \|\xi\|_2^2 = \|\xi\|_2^2 \int_{\|x\|_2 \leq \varepsilon} \cos^2 \angle( x,\xi) \|x\|_2^2 = \theta \|\xi\|_2^2, \]
where $\theta>0$ is defined by the above equality. It does not depend on $\xi$ due to the symmetry of the Euclidean ball. Hence
\[ \sum_{|\alpha|=2} A_\alpha w \xi^\alpha  \geq \theta w \|\xi\|_2^2.\]
Therefore the operator $\sum_{|\alpha|=2} A_\alpha w D^\alpha$ is strongly elliptic. We apply the regularity result \eqref{daur} to obtain that $u$ is analytic.
\end{proof} 


Now we are in a position to prove Theorem \ref{twod}.

\begin{proof}[Proof of Theorem \ref{twod}]
We need to show the following equality
\begin{equation}\label{equ8}
 u(x) \int_{B(0,1)} w(x+ry) dy = \int_{B(0,1)} u(x+ry)w(x+ry) dy.
\end{equation}
In order to prove \eqref{equ8} we use the generalized Pizzetti formula for a measure $\mu$ being the normalized Lebesgue measure on the unit ball. Then
\[ \Fs(\xi) = \int_{B(0,1)} e^{-i \xi y} dy = \sum_{j=0}^\infty \frac{(-i)^j}{j!} \int_{B(0,1)} (\xi y )^j dy = \sum_{j=0}^\infty \frac{(-i)^j}{j!} \sum_{|\alpha|= j }A_\alpha \xi^\alpha = \sum_{\alpha \in \Natural^n} \frac{(-i)^{|\alpha|}}{|\alpha|!} A_\alpha \xi^\alpha,\]
where $A_\alpha =\binom{|\alpha|}{\alpha} \int_{B(0,1)} y^\alpha dy$. We apply Theorem \ref{twza} twice: to $w$ and $uw$ to obtain
\begin{align}
\label{e43} \int_{B(0,1)} w(x+ry) dy &= \sum_{\alpha \in \Natural^n} \frac{r^{|\alpha|}}{|\alpha|!} A_\alpha D^\alpha w(x),\\
 \label{e44} \int_{B(0,1)} u(x+ry) w(x+ry) dy& = \sum_{\alpha \in \Natural^n} \frac{r^{|\alpha|}}{|\alpha|!} A_\alpha D^\alpha (u(x)w(x)),
\end{align}
Subtract the above equations from each other to obtain the following:
\begin{align*}
& u(x)  \int_{B(0,1)} w(x+ry) dy-\int_{B(0,1)} u(x+ry) w(x+ry) dy \\
&= \sum_{\alpha \in \Natural^n} \frac{r^{|\alpha|}}{|\alpha|!} A_\alpha \left( u(x) D^\alpha(w(x)) - D^\alpha (u(x)w(x))  \right) \\
&= \sum_{j=0}^\infty \frac{r^j}{j!} \sum_{|\alpha|=j}  A_\alpha \left( u(x) D^\alpha(w(x)) - D^\alpha (u(x)w(x))  \right) =0,
\end{align*}
where in the last step we appeal to \eqref{system3}. Thus, the proof is completed.
\end{proof}

\subsection{Applications of Theorem \ref{twod}}\label{51}

In the last part of our work we present some of the consequences of Theorem \ref{twod}. First of all, we are now in a position, to augment observation \eqref{lp} with the converse inclusion:
\[ \Hs(\Omega, l^p,dx) =\text{span} \left\{1,x,y,xy, x^2-y^2, xy^2-\frac{x^3}{3}, xy^3-x^3y,x^2y-\frac{y^3}{3} \right\}, \]
and to add, that now we see that the dimension of $\Hs(\Omega, l^p,dx)$ is equal to 8. The result in dimension 3 is due to Grzegorz Łysik, who computed it to be equal to 48. Those numbers coincide with $2^n n!$ - the number of linear isometries of $(\Real^n,l^p)$, which is discovered in \cite{LI}. We believe that there is a link between the dimension of the space $\Hs(\Omega, d,\mu)$, still to be examined.

Moreover, let us consider the metric measure space $(\Omega,l^2,wdx)$ for the analytic weight function $w$. Then, by Theorems \ref{twsys} and \ref{twod} and Section \ref{ldwa} we know that $\Hs(\Omega,l^2,wdx )$ consists exactly of solutions to the following system of equations
\begin{equation}\label{e5}
\Delta u \Delta^j w +2 \nabla u \nabla (\Delta^j w)=0, \qquad \text{for } j=0,1,\ldots.
\end{equation}
Let us observe, that $u$ solves also infinitely many other systems of equations, obtained from \eqref{e5} by excluding $l \in \Natural$ initial equations 
\[ \Delta u \Delta^{j+l} w +2 \nabla u \nabla (\Delta^{j+l} w)= \Delta u \Delta^j(\Delta^l w) +2 \nabla u \nabla (\Delta^j(\Delta^l w))=0, \qquad \text{for } j=0,1,\ldots. \]
Therefore, $u$ is strongly harmonic in countably many metric measure spaces $(\Omega,l^2,\Delta^l w dx)$ for all $l \in \Natural$. In other words, function $u$ has infinitely many mean value properties, with respect to different weighted Lebesgue measures $d\mu = \Delta^l w dx$ for all $l \in \Natural$, whenever $\Delta^l w$ are positive.
\vspace{10pt}

\bibliographystyle{amsplain}

\end{document}
